\newcommand{\fe}{\text{ for every }}
\newcommand{\none}[1]{\norm*{#1}_{\mathbf 1}}
\newcommand{\noner}[1]{\norm*{#1}_{\mathbf 1,R}}
\newcommand{\nmean}[1]{\overbar{\M}\pa{#1}}
\newcommand{\nmeanr}[1]{\overbar{\M}_R\pa{#1}}
\newcommand{\ninfty}[1]{\norm*{#1}_{\mathbf \infty}}
\newcommand{\nvar}[1]{\norm*{#1}_{\mathbf V}}
\newcommand{\rr}{\raggedright}
\begin{document}
\title {Generation of measures on the torus with good sequences of
  integers}

\author{E. Lesigne, A. Quas, J. Rosenblatt, M. Wierdl}


\maketitle
\begin{abstract}
  Let $S\coloneqq (s_1<s_2<\dots)$ be a strictly increasing sequence
  of positive integers and denote $\e(\beta)\cq e^{2\pi i \beta}$. We
  say $S$ is \emph{good} if for every real $\alpha$ the limit
  $\lim_N \frac1N\sum_{n\le N} \e(s_n\alpha)$ exists. By the Riesz
  representation theorem, a sequence $S$ is good iff for every real
  $\alpha$ the sequence $\pa*{s_n\alpha}$ possesses an asymptotic
  distribution modulo 1.  Another characterization of a good sequence
  follows from the spectral theorem: the sequence $S$ is good iff in
  any probability measure preserving system $(X,\m,T)$ the limit
  $\lim_N \frac1N\sum_{n\le N}f\pa{T^{s_n}x}$ exists in $L^2$-norm for
  $f\in L^2(X)$.

  Of these three characterization of a good set, the one about limit
  measures is the most suitable for us, and we are interested in
  finding out what the limit measure
  $\mu_{S,\alpha}\cq \lim_N\frac1N\sum_{n\le N} \delta_{s_n\alpha}$ on
  the torus can be. In this first paper on the subject, we investigate
  the case of a single irrational $\alpha$.  We show that if $S$ is a
  good set then for every irrational $\alpha$ the limit measure
  $\mu_{S,\alpha}$ must be a continuous Borel probability measure.
  Using random methods, we show that the limit measure
  $\mu_{S,\alpha}$ can be any measure which is absolutely continuous
  with respect to the Haar-Lebesgue probability measure on the torus.
  On the other hand, if $\nu$ is the uniform probability measure
  supported on the Cantor set, there are some irrational $\alpha$ so
  that for no good sequence $S$ can we have the limit measure
  $\mu_{S,\alpha}$ equal $\nu$.  We leave open the question whether
  for any continuous Borel probability measure $\nu$ on the torus
  there is an irrational $\alpha$ and a good sequence $S$ so that
  $\mu_{S,\alpha}=\nu$.
\end{abstract}
\tableofcontents

\section{Introduction, main results}
\label{sec:intr-main-results}

Throughout the paper we will use the \emph{arithmetic average operator
  $\setA$}: for a finite index set $S$, a vector space $V$ and a
$S\to V$ function $f$ we define $\setA_Sf(s)$
\begin{equation}
  \label{eq:1}
  \setA_Sf(s)=\setA_{s\in S}f(s)\cq \frac1{\# S}\sum_{s\in S}f(s)
\end{equation}
where $\#S$ denotes the number of elements in $S$.

We use the convention that if an interval appears as an index set in a
summation then we consider only the integers in the interval.  For
example, $\sum_{n\in [0,N)}a_n=\sum_{n\in\{0,1,\dots,N-1\}}a_n$.

We also use Weyl's notation $\e(\beta)\cq e^{2\pi i\beta}$.  Note that
$\e^p(\beta)=\e(p\beta)$ for every integer $p$.

We denote by $\setT$ the torus $\setR/\setZ$ and we represent it as
the unit closed interval $[0,1]$ with $0=1$.

\subsection{Good sequences, main question}
\label{sec:good-sequences-main}

\begin{defn}[label={defn:1}]{Good sequence}{}\rr
  We say that a sequence $S=(s_n)_{n\in\setN}$ of integers is
  \emph{good} if the limit $\lim_N \setA_{n\in[1,N]}\e\pa*{s_n\alpha}$
  exists for every real number $\alpha$.
\end{defn}
Good sequences have been extensively studied in many parts of
mathematics, such as in number theory and ergodic theory.

In this paper we restrict our attention to strictly increasing
sequences $S$ of positive integers in which case we can and will
consider $S$ as a subset of $\setN$, and we'll use the concept of good
sequence and good set interchangeably.

Among the wellknown good sequences are the full set $\setN$ of
positive integers\autocite{MR1511862} ,  the sequence
$(n^2)_{n\in\setN}$ of squares\autocite{MR1511862} and the sequence
$(p_n)_{n\in\setN}$ of primes\autocite{zbMATH03026053} where $p_n$
denotes the $n$th prime number. For these sequences the limits
$\lim_N\setA_{n\in[1,N]}\e(s_n\alpha)$ are as follows

\begin{equation}
  \label{eq:2}
  \begin{split}
    \lim_N\setA_{n\in[0,N)}\e(n\alpha)&=
                                        \begin{dcases*}
                                          1& if $\alpha=1$\\
                                          0& if $\alpha\ne 0$
                                        \end{dcases*}
    \\
    \lim_N\setA_{n\in[0,N)}\e(n^2\alpha)&=
                                          \begin{dcases*}
                                            \setA_{b\in[1,q]}\e\pa*{b^2\frac{a}{q}}
                                            & if $\alpha=\frac{a}{q}$, $\gcd(a,q)=1$ \\
                                            0 & if $\alpha$ is irrational
                                          \end{dcases*}
    \\
    \lim_N\setA_{n\in[1,N]}\e(p_n\alpha)&=
                                          \begin{dcases*}
                                            \setA_{\substack{b\in[1,q]\\\gcd(b,q)=1}}\e(b/q)
                                            & if $\alpha=\frac{a}{q}$, $\gcd(a,q)=1$ \\
                                            0 & if $\alpha$ is irrational
                                          \end{dcases*}
  \end{split}
\end{equation}

In case of a good sequence $S=(s_n)$ and a fixed $\alpha$, the
existence of $\lim_N\setA_{n\in[1,N]}\e(s_np\alpha)$ for every
$p\in\setZ$ implies, by uniform approximation of a continuous
$\setT\to\setC$ function by trigonometric polynomials, that for every
continuous $\setT\to\setC$ function $\phi$ the limit
$\lim_N\setA_{n\in[1,N]}\phi(s_n\alpha)$ exists. By the Riesz
representation theorem, this implies that the weak limit
$\lim_N\setA_{n\in[1,N]}\delta_{s_n\alpha}$ of discrete measures
$\setA_{n\in[1,N]}\delta_{s_n\alpha}$ on $\setT$ exists.

By this argument, the existence of
$\lim_N\setA_{n\in[1,N]}\e(s_n\alpha)$ for every $\alpha$ implies the
existence of the limit measure
$\lim_N\setA_{n\in[1,N]}\delta_{s_n\alpha}$ for every $\alpha$.
Denote the Haar-Lebesgue probability measure on the torus $\setT$ by
$\lambda$ and recall that the Fourier coefficients $\lambda(\e^p)$ of
$\lambda$ satisfy
\begin{equation}
  \label{eq:3}
  \lambda(\e^p)=
  \begin{dcases*}
    1                                                                        & for $p=0$\\
    0                                                                        & for $p\in\setZ$, $p\ne 0$
  \end{dcases*}
\end{equation}
where for a given measure $\nu$ and $\nu$-integrable function $\phi$,
we use\footnote{and will use troughout the paper} the functional
notation $\nu(\phi)$ for the integral of $\phi$ with respect to $\nu$,
\begin{equation}
  \label{eq:4}
  \nu(\phi)=\int \phi\di \nu
\end{equation}
For our three good sets the limit measures are as follows.
\begin{equation}
  \label{eq:5}
  \begin{split}
    \lim_N\setA_{n\in[1,N]}\delta_{n\alpha}                                  & =
                                                                               \begin{dcases*}
                                                                                 \setA_{b\in[1,q]}\delta_{b/q} & if $\alpha=a/q$, $\gcd(a,q)=1$         \\
                                                                                 \lambda                       & if $\alpha$ is irrational
                                                                               \end{dcases*}
    \\
    \lim_N\setA_{n\in[1,N]}\delta_{n^2\alpha}                                & =
                                                                               \begin{dcases*}
                                                                                 \setA_{b\in[1,q]}\delta_{b^2\frac{a}{q}}
                                                                                 & if $\alpha=\frac{a}{q}$, $\gcd(a,q)=1$ \\
                                                                                 \lambda                     & if $\alpha$ is irrational
                                                                               \end{dcases*}
    \\
    \lim_N\setA_{n\in[1,N]}\delta_{p_n\alpha}                                & =
                                                                               \begin{dcases*}
                                                                                 \setA_{\substack{b\in[1,q]                                           \\\gcd(b,q)=1}}\delta_{b/q}
                                                                                 & if $\alpha=\frac{a}{q}$, $\gcd(a,q)=1$ \\
                                                                                 \lambda                     & if $\alpha$ is irrational
                                                                               \end{dcases*}
  \end{split}
\end{equation}
What we see in these three examples is that in case of irrational
$\alpha$ the limit measure is the Haar-Lebesgue measure $\lambda$ and
in case of rational $\alpha=a/q$, $\gcd(a,q)=1$, the limit measure is
supported on a subset of the $q$th roots of unity and appears to be
quite uniform on its support.  In case of irrational $\alpha$, the
simplest question is if it's possible that the limit measure is not
$\lambda$.  In case of rational $\alpha$, we can ask if the limit
measure always has to show some kind of uniformity.

Let us consider a good sequence $S=(s_n)$. The existence of the limit
$\lim_N\setA_{n\in[1,N]}\e(s_n\alpha)$ for every $\alpha$ implies that
the weak limit $\lim_N\setA_{n\in[1,N]}\delta_{s_n\alpha}$ of discrete
measures $\setA_{n\in[1,N]}\delta_{s_n\alpha}$ on $\setT$ exists for
every $\alpha$.  Let us denote this weak limit measure by
$\mu_{S,\alpha}$,
\begin{equation}
  \label{eq:6}
  \mu_{S,\alpha}\cq \lim_N \setA_{n\in[1,N]}\delta_{s_n\alpha}
\end{equation}
The main question we want to investigate in this paper is
\begin{quest}[label={quest:1}]{Main question}{}
  What can the limit measure $\mu_{S,\alpha}$ be? Can it be any Borel
  probability measure on $\setT$?
\end{quest}

\subsection{Main results}
\label{sec:main-results}

As we stated earlier, we try to answer \cref{quest:1} for strictly
increasing sequences, and unless we say otherwise, we assume from now
on that $S=(s_n)$ is a strictly increasing sequence of positive
integers which we often consider as a subset of $\setN$.

Our first observation is that the answer to \cref{quest:1} will depend
on $\alpha$.  If $\alpha$ is a rational number, say,
$\alpha=\frac{a}{q}$ with $\gcd(a,q)=1$, then the limit measure is
clearly supported on the set
\begin{equation}
  \label{eq:7}
  \setT_q\cq\set{b/q}{b\in[1,q]}
\end{equation}
of $q$th roots of unity.  So the question is if the limit measure
$\mu_{S,a/q}$ can be any probability measure supported on $\setT_q$?
The answer is yes.  First a terminology.
\begin{defn}[label={defn:2}]{Representable measure at $\alpha$}{}\rr
  Let $S$ be a good set, and let $\nu$ be a nonzero, finite Borel
  measure on $\setT$.

  We say that \emph{$S$ represents $\nu$ at $\alpha\in\setT$} if
  $\mu_{S,\alpha}=\frac1{\nu(\setT)}\nu$.

  We say \emph{$\nu$ is representable at $\alpha$} if there is a good
  set which represents $\nu$ at $\alpha$.
\end{defn}

\begin{thm}[label={thm:representation_rational}]{Every probability
    measure on $\setT_q$ can be represented}{}\rr 
  Let $q$ and $a$ be positive integers with $\gcd(a,q)=1$, and let
  $\nu$ be a probability measure supported on the set $\setT_q$ of
  $q$th roots of unity.

  Then $\nu$ can be represented at $\frac{a}{q}$, that is, there is a
  good set $S$ so that $\mu_{S,\frac{a}{q}}=\nu$.
\end{thm}
Before discussing the limit measure $\mu_{S,\alpha}$ for irrational
$\alpha$, let us note the following fact which will help us appreciate
the concept of a good set.

Suppose we are given an irrational number $\alpha\in\setT$ and a Borel
probability measure $\nu$ on $\setT$. We claim that there exists a
sequence $(x_n)$ in $\setT$ with asymptotic distribution $\nu$,
i. e. such that $\lim_N\setA_{n\in[1,N]}\delta_{x_n}=\nu$. Considering
such a sequence and using the density of the sequence $(n\alpha)_n$ in
$\setT$, we can select a strictly increasing sequence $(s_n)$ of
integers so that $\lim_n(s_n\alpha-x_n)=0\mod 1$, and we have
$\lim_N\setA_{n\in[1,N]}\delta_{s_n\alpha}=\nu$. Taking
$S=\set{s_n}{n\in\setN}$, we could say that $\mu_{S,\alpha}=\nu$, but
nothing insures us that the set $S$ is good.

There are different ways to prove the preceding claim. For example we
can pick the numbers $x_n$ randomly and independently with law $\nu$,
and the strong law of large numbers asserts that the sequence $(x_n)$
has, almost surely, the right asymptotic distribution.

It is particularly simple to get a point-mass as a limit measure.  For
example, to get the Dirac measure at $1/2$, so $\nu=\delta_{1/2}$,
take a strictly increasing sequence $(s_n)$ of natural numbers so that
$s_n\alpha$ converges to $1/2$ $\mod 1$, and let
$S\cq\set*{s_n}{n\in\setN}$.  In contrast to this example, for good
sets we have a dramatic departure from the case of rational $\alpha$.

\begin{thm}[label={thm:cont_only}]{$\mu_{S,\alpha}$ is continuous for
    irrational $\alpha$}{}\rr
  Only continuous measures can be represented at an irrational number.

  To spell this out, let $S=(s_n)$ be a good sequence and $\alpha$
  be an irrational number.

  Then the limit Borel probability measure
  $\mu_{S,\alpha} =\lim_N\setA_{n\in [1,N]}\delta_{s_n\alpha}$ is a
  continuous measure.

\end{thm}
The obvious question in turn is if any given continuous Borel
probability measure can be represented at any irrational number. The
answer is no, as the next result shows.
\begin{thm}[label={cantor_nonrep}]{Some continuous measures cannot be
    represented at every irrational point}{}\rr
  Let $\nu$ be a Borel probability measure on $\setT$ so that its
  Fourier coefficients do not converge to $0$, so
  \begin{equation}
    \label{eq:8}
    \limsup_{p\to\infty}\abs*{\mu\pa*{\e^p}}>0
  \end{equation}

  Then there is a set $A\subset \setT$ of full Lebesgue measure so
  that $\nu$ cannot be represented at any $\alpha\in A$.
\end{thm}
Since a measure $\nu$ is called a \emph{Rajchman}
measure\autocite{MR1364897} if its Fourier coefficients vanish at
infinity, that is, $\lim_p\nu\pa*{\e^p}=0$, we can rephrase
\cref{cantor_nonrep} by saying that if $\nu$ is representable at every
irrational $\alpha$ then it must be a Rajchman measure.  A well known
non-Rajchman continuous measure is the uniform measure on the triadic
Cantor set.

While \cref{cantor_nonrep} doesn't exclude the possibility that
$A=\setT$, that is, a non-Rajchman measure cannot be represented
anywhere, Christophe Cuny and François
Parreau\autocite{parreau:hal-03805242} constructed a non-Rajchman
measure which is representable at uncountably many $\alpha$'s.
Nevertheless, the following question remains open.
\begin{quest}[label={quest:2}]{Is every continuous measure
    representable somewhere?}{}
  Let $\nu$ be a continuous Borel probability measure on $\setT$.

  Is there an irrational $\alpha$ so that $\nu$ is representable at
  $\alpha$?
\end{quest}
The next result says that if $\nu$ is absolutely continuous with
respect to the Lebesgue probability measure $\lambda$ on the torus
$\setT$, then it can be represented at every irrational $\alpha$.

\begin{thm}[label={thm:abs_cont_reprable}]{Absolutely continuous
    measures are representable at every irrational point}{}\rr 
  Let $\nu$ be a Borel probability measure on $\setT$ which is
  absolutely continuous with respect to the Lebesgue probability
  measure on $\setT$.  Let $\alpha$ be an irrational number.

  Then $\nu$ is representable at $\alpha$.
\end{thm}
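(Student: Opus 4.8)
The plan is to build the good set $S$ by a \emph{random selection}, this being the natural way to turn a prescribed density into a set whose dilates by $\alpha$ equidistribute according to $\nu$. Write $\di\nu=g\di\lambda$ with $g\in L^1(\setT)$, $g\ge 0$, $\int g\di\lambda=1$. I would first treat the case of a continuous (hence bounded) density and then pass to a general $L^1$ density by a variation-norm approximation. For the base case, fix $M\ge\max g$ and include each $n\in\setN$ in $S$ independently with probability $p_n\cq g(n\alpha)/M$. Since $\theta\mapsto g(\theta)$ and $\theta\mapsto g(\theta)\e(q\theta)$ are continuous and $(n\alpha)$ is equidistributed, Weyl's theorem gives $\frac1N\sum_{n\le N}p_n\to\frac1M$ and $\frac1N\sum_{n\le N}p_n\e(qn\alpha)\to\frac1M\nu(\e_q)$ for every $q\in\setZ$. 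As the selection is independent with $\operatorname{Var}(\setone_S(n)-p_n)\le 1$, Kolmogorov's strong law makes the normalized fluctuations $\frac1N\sum_{n\le N}(\setone_S(n)-p_n)$ and $\frac1N\sum_{n\le N}(\setone_S(n)-p_n)\e(qn\alpha)$ vanish almost surely. Hence, almost surely, $\#S(N)/N\to 1/M>0$ (so $S$ is infinite of positive density) and $\setA_{s\in S(N)}\e_q(s\alpha)\to\nu(\e_q)$ for every $q\in\setZ$; since trigonometric polynomials are dense in $C(\setT)$, the transform measures $\setA_{s\in S(N)}\delta_{s\alpha}$ converge weakly to $\nu$, that is $\mu_{S,\alpha}=\nu$.

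For goodness I must show that, \emph{with full probability}, the limit in \cref{eq:9} exists for \emph{every} $\beta\in\setT$ simultaneously, not merely for each fixed $\beta$. I split $\sum_{n\le N}\setone_S(n)\e(n\beta)$ into its mean $\sum_{n\le N}p_n\e(n\beta)$ and the fluctuation $\sum_{n\le N}(\setone_S(n)-p_n)\e(n\beta)$. Applying Weyl's theorem on $\setT^2$ to the continuous function $(x,y)\mapsto g(x)\e(y)$ evaluated along $(n\alpha,n\beta)$, the mean divided by $N$ converges for every $\beta$; combined with $\#S(N)/N\to1/M$ this handles the mean term. The crux is to control the fluctuation uniformly in $\beta$, which is exactly a Salem–Zygmund maximal bound for the random trigonometric polynomial with independent centred coefficients $\xi_n\cq\setone_S(n)-p_n$, $\abs*{\xi_n}\le1$: almost surely $\sup_{\beta}\abs*{\sum_{n\le N}\xi_n\e(n\beta)}=O(\sqrt{N\log N})$. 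Dividing by $\#S(N)\sim N/M$, this fluctuation is $O(\sqrt{\log N/N})\to 0$ uniformly in $\beta$, so almost surely the limit in \cref{eq:9} exists for every $\beta$ and equals the Weyl limit of the mean. Thus $S$ is almost surely a good set representing $\nu$ at $\alpha$.

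To reach a general $g\in L^1$, I would argue that the family of probability measures representable at $\alpha$ is closed under the variation distance $\nvar{\cdot}$. Given representing good sets $S_k$ for $\nu_k$ with $\nvar{\nu_k-\nu}\to0$, concatenate them over consecutive blocks $(N_{k-1},N_k]$, using $S_k$ inside block $k$, with $N_k/N_{k-1}\to\infty$. Because each $S_k$ is good, within each block the averages $\setA_{s}\e(s\beta)$ converge, for every fixed $\beta$, to a value whose dependence on $k$ is through the Fourier coefficients of $\nu_k$, which converge to those of $\nu$. Choosing each block long enough that the running average settles within $1/k$ of its block-limit before the next block begins, and fast-growing enough that earlier blocks are negligible, one checks that for every $\beta$ the full limit $\lim_N\setA_{s\in S(N)}\e(s\beta)$ exists and that $\mu_{S,\alpha}=\nu$; note that goodness is needed only pointwise in $\beta$, so no uniformity in $\beta$ is required here. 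Taking $g_k$ nonnegative continuous (e.g.\ Fejér) densities with $\norm*{g_k-g}_{L^1}\to0$, equivalently $\nvar{\nu_k-\nu}\to0$, and feeding the $S_k$ from the base case into this concatenation completes the proof.

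The main obstacle is the uniform-in-$\beta$ fluctuation estimate of the second paragraph: it is what lets a single random set be good at all frequencies at once, and it is the only nonroutine analytic ingredient, resting on the Salem–Zygmund bound for random trigonometric polynomials together with a Borel–Cantelli argument along a dyadic subsequence to upgrade it to an almost-sure statement for all large $N$. Everything else—the ergodic averages (Weyl), the almost-sure convergence of Fourier coefficients (Kolmogorov), and the block-concatenation closure—is standard, though the block bookkeeping requires care.
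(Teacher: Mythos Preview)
Your treatment of the bounded (continuous) density is correct and is essentially the paper's route: the weight $w(n)=g(n\alpha)$ is good by the computation in \cref{sec:basic-example-repr}, and the random thinning with bias $g(n\alpha)/M$ together with the Salem--Zygmund/Bernstein bound (the paper's \cref{lem:5}) controls the fluctuation uniformly in $\beta$, so a single random set is good and represents $\nu$ at $\alpha$.

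The gap is in your passage to unbounded $g$. The assertion that the block-$k$ limit of $\setA_{s\in S_k(N)}\e(s\beta)$ ``depends on $k$ through the Fourier coefficients of $\nu_k$'' is false: that limit is $\mu_{S_k,\beta}(\e)$, which for $\beta\ne p\alpha$ is determined by all of $S_k$, not by $\nu_k=\mu_{S_k,\alpha}$ alone. For instance $\setN$ and $2\setN$ are both good and both represent $\lambda$ at every irrational $\alpha$, yet $\mu_{\setN,1/2}(\e)=0$ while $\mu_{2\setN,1/2}(\e)=1$; alternating blocks of these two sets produces a set that is not good. So closure of representable measures under $\nvar{\cdot}$ via naive concatenation of arbitrary representing sets simply fails. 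Even restricting to your specific random $S_k$ (for which one can check $\abs{\mu_{S_k,\beta}(\e)-\mu_{S_l,\beta}(\e)}\le\norm{g_k-g_l}_{L^1}$, so the block-limits do converge), your sentence ``choosing each block long enough that the running average settles within $1/k$'' cannot be made to work: the index at which $\setA_{s\in S_k(N)}\e(s\beta)$ is within $1/k$ of its limit depends on $\beta$, with no finite uniform bound, and your independently chosen $S_k$ are not $\none{\cdot}$-Cauchy, so the completeness mechanism does not apply to them.

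The paper circumvents this by working one level down, with weights rather than sets. The weights $w_k(n)=g_k(n\alpha)$ \emph{are} $\none{\cdot}$-Cauchy, since $\none{w_k-w_l}=\norm{g_k-g_l}_{L^1(\lambda)}$ by equidistribution; the completeness lemma (\cref{lem:4}) then yields a single good weight $w$ with $\mu_{w,\alpha}=\rho\lambda$ (\cref{prop:7}). When $\rho$ is unbounded so is $w$, and the paper multiplies by a decreasing step weight $\sigma$ (\cref{cor:1}) to obtain a bounded sublacunary weight $v=\sigma w$ with the \emph{same} limit measure at every $\beta$. Only then is the random selection applied---once, to $v$---invoking the Salem--Zygmund bound exactly as in your base case. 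The moral is that the uniform-in-$\beta$ estimate must act on a single bounded weight; it cannot be pieced together from independent random selections after the fact.
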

Our proof of \cref{thm:abs_cont_reprable} is flexible and enables us
to show a more general result, namely it turns out that a given
absolutely continuous measure can be represented by a good subset of
any given good set, provided it doesn't increase too fast, it is
sublacunary.  For a given set $R\subset \setN$ let $R(N)$ denote the $N$th initial
segment of $R$,
\begin{equation}
  \label{eq:9}
  R(N)\cq R\cap[1,N]
\end{equation}
We say $R$ is \emph{sublacunary}\footnote{Traditionally, $(r_n)$ is
  called lacunary if it satisfies $\liminf_n\frac{r_{n+1}}{r_n}>1$,
  and such a sequence satisfies $\#R(N)=O(\log N)$.  Traditionally, a
  sublacunary sequence is one that satisfies
  $\lim_n\frac{r_{n+1}}{r_n}=1$ and such a sequence satisfies
  $\lim_N\frac{\#R(N)}{\log N}=\infty$.  Our definion of a sublacunary
  sequence in \cref{eq:10} describes sequences which satisfy
  $\liminf_n\frac{r_{n+1}}{r_n}=1$ but may not satisfy
  $\lim_n\frac{r_{n+1}}{r_n}=1$.} if it satisfies the growth condition
\begin{equation}
  \label{eq:10}  
  \lim_N\frac{\# R(N)}{\log N}=\infty
\end{equation}
In case we consider the sequence $(r_n)$ instead of the set $R$, it's
more useful to write \cref{eq:10} in the form
\begin{equation}
  \label{eq:11}
  \lim_N\frac{N}{\log r_N}=\infty
\end{equation}

\begin{thm}[label={thm:general_representability}]{Absolutely
    continuous measures can be represented by subsets of a good
    set}{}\rr
  Let $R$ be a sublacunary good set.  Let $\alpha$ be an irrational
  number, and let the Borel probability measure $\nu$ be absolutely
  continuous with respect to $\mu_{R,\alpha}$.

  Then there is a good set $S\subset R$ which represents $\nu$ at
  $\alpha$.
\end{thm}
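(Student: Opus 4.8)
The plan is to build $S$ by an independent random selection from $R$ whose biases encode the density of $\nu$, mirroring the method behind \cref{thm:abs_cont_reprable}. Let $g\cq\frac{d\nu}{d\mu_{R,\alpha}}\ge0$, so that $\mu_{R,\alpha}(g)=\nu(\setT)=1$. I would first treat the case where $g$ is continuous, hence bounded, and reduce the general $L^1(\mu_{R,\alpha})$ case to it at the end. Writing $R=(r_n)$, set $p_n\cq g(r_n\alpha)/\ninfty{g}\in[0,1]$, let $(\xi_n)$ be independent Bernoulli variables with $\p(\xi_n=1)=p_n$, and put $S\cq\set*{r_n}{\xi_n=1}$. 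The biases depend only on $\alpha$, yet to qualify as a good set $S$ must have convergent averages at \emph{every} $\beta$, so the randomness has to be controlled uniformly in $\beta$.

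Second, I would identify the limit by computing means. For fixed $\beta\in\setT$ and $q\in\setZ$ the sum $\sum_{r_n\le N}\xi_n\e(qr_n\beta)$ has mean $\ninfty{g}^{-1}\sum_{r_n\le N}g(r_n\alpha)\e(qr_n\beta)$, while $\#S(N)$ has mean $\ninfty{g}^{-1}\sum_{r_n\le N}g(r_n\alpha)$. Approximating the continuous $g$ uniformly by trigonometric polynomials $\sum_p\hat g_p\e_p$ rewrites each product $g(r_n\alpha)\e(qr_n\beta)$ as a combination of $\e\pa*{r_n(p\alpha+q\beta)}$; since $R$ is good, the average $\setA_{r_n\le N}\e(r_n\gamma)$ converges for every $\gamma$, so both means, divided by $\#R(N)$, converge. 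At $\beta=\alpha$ their ratio tends to $\mu_{R,\alpha}(g\e_q)/\mu_{R,\alpha}(g)=\nu(\e_q)$, so once the random sums are shown to track their means, $S$ is good and $\mu_{S,\alpha}=\nu$; the case $q=0,\beta=0$ also gives $\#S(N)\to\infty$, so $S$ is infinite.

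Third, and this is the heart of the matter, I must show the fluctuations $F_N(\beta,q)\cq\sum_{r_n\le N}(\xi_n-p_n)\e(qr_n\beta)$ satisfy $F_N(\beta,q)/\#R(N)\to0$ almost surely, simultaneously for all $\beta$ and $q$. For fixed $\beta,q$, since the variance is at most $\#R(N)$, a Bernstein bound gives $\p\pa*{\abs*{F_N(\beta,q)}>\epsilon\#R(N)}\le C\exp\pa*{-c\epsilon^2\#R(N)}$. The obstacle is the uncountability of $\beta$, which I would overcome by observing that $\beta\mapsto F_N(\beta,q)$ is a trigonometric polynomial of degree at most $\abs*{q}N$; Bernstein's inequality for trigonometric polynomials then bounds $\sup_\beta\abs*{F_N(\cdot,q)}$ in terms of its values on a net of $O(\abs*{q}N)$ points. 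A union bound over the net costs only a polynomial-in-$N$ factor, and this is exactly defeated by the exponential $\exp\pa*{-c\epsilon^2\#R(N)}$ precisely because the sublacunarity hypothesis \eqref{eq:26} forces $\#R(N)/\log N\to\infty$. Since the counting function of $R$ increases in unit steps, I can take the doubling subsequence $N_k$ with $\#R(N_k)=2^k$, control the oscillation inside each block $[N_k,N_{k+1})$ by a maximal inequality, and apply Borel--Cantelli to conclude that with full probability $F_N(\beta,q)/\#R(N)\to0$ for every $\beta$ and $q$ at once. Any realization in this event furnishes $S\subset R$ whose transform measures converge at every $\beta$, so $S$ is good by \cref{prop:1} and $\mu_{S,\alpha}=\nu$.

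Finally, for a general density $g\in L^1(\mu_{R,\alpha})$ I would approximate it in $L^1$ by continuous densities $g_k\ge0$, obtaining probability measures $\nu_k\to\nu$ in variation distance, represent each $\nu_k$ by a good $S_k\subset R$ as above, and splice the $S_k$ along a sufficiently sparse sequence of thresholds so that the resulting set remains good and its limit at $\alpha$ is the variation limit $\nu$. I expect the uniform-in-$\beta$ concentration of the third paragraph to be the main difficulty; the remaining steps are either bookkeeping or direct appeals to the goodness of $R$, and the sublacunary growth condition enters only, but decisively, to beat the union bound over the frequency net.
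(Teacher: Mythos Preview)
Your treatment of the bounded case---random selection with biases $p_n=g(r_n\alpha)/\ninfty{g}$, Bernstein concentration, a net of size $O(N)$ in $\beta$, and Borel--Cantelli, with sublacunarity beating the polynomial union bound---is essentially the paper's \cref{prop:8}/\cref{lem:5}, so that part is fine.

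The gap is in your last paragraph, the reduction from general $g\in L^1(\mu_{R,\alpha})$ to continuous densities. The good sets $S_k\subset R$ you build for the truncations $g_k$ have relative mean $\M_R(S_k)\approx 1/\ninfty{g_k}$, which tends to $0$ when $g$ is unbounded. Consequently the $S_k$ are \emph{not} Cauchy in the $\none{}$-seminorm, and the splicing machinery that would let you conclude ``the spliced set is good'' (in the paper, \cref{lem:1} together with \cref{prop:4}) needs $\limsup_k\M_R(S_k)>0$. Without that, a naive splice $S\cap(N_k,N_{k+1}]\cq S_k\cap(N_k,N_{k+1}]$ can fail to be good: as $N$ crosses a threshold $N_k$, the new block has much smaller density than the old one, and there is no uniform-in-$\beta$ control forcing the normalized averages to stabilize before the block dominates. ``Sufficiently sparse thresholds'' does not fix this, because the rate at which $\setA_{s\in S_k(N)}\e(s\beta)$ approaches its limit depends on $\beta$ and is not uniform.

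The paper avoids this by splicing at the level of \emph{weights}, not sets: the weights $w_k(n)\cq g_k(r_n\alpha)$ all have mean $1$ and are $\none{}$-Cauchy (since $\none{w_k-w_l}=\mu_{R,\alpha}\pa*{\abs{g_k-g_l}}$), so they splice to an unbounded good weight $w$ with $\M_R(w)=1$ representing $\rho$. The new idea you are missing is then an Abel summation step (\cref{prop:9}/\cref{cor:1}): multiply $w$ by a \emph{decreasing} weight $\sigma$ chosen so that $v\cq\sigma w$ is bounded by $1$ yet still sublacunary; because $\sigma$ is monotone, $v$ represents exactly the same limit measures as $w$ at every $\beta$. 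Only now is the random selection applied, once, to the bounded weight $v$. This renormalization is not bookkeeping---it is precisely what replaces your failing splice.
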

\marginnote{As a consequence of ~\cref{thm:general_representability},
  every measure which is absolutely continuous with respect to the
  Lebesgue measure can be represented at any given irrational $\alpha$
  by a subset of the primes, squares, or
  $\set*{\intpart*{n^2\log n}}{n\in\setN}$.} We will see that the
proof of \cref{thm:general_representability} reveals a close
connection between the Radon-Nikodym derivative $\rho$ of $\nu$ with
respect to $\mu_{R,\alpha}$ and the relative mean\footnote{The usual
  terminology is relative \emph{density} instead of relative mean, but
  we will use the more general concept of the \emph{mean of a
    $R\to\setC$ function} in \cref{sec:weighted-averages} and we
  prefer to use a single terminology and notation for economical
  reasons.} of the set $S$ representing $\nu$.  For a given
$R\subset\setN$ and $S\subset R$, the \emph{relative mean $\M_R(S)$}
of $S$ in $R$ is defined by
\begin{equation}
  \label{eq:12}
  \M_R(S)\cq \lim_N\frac{\# S(N)}{\# R(N)}
\end{equation}
provided the limit on the right exists.  The \emph{relative upper mean
  $\overbar{\M}_R(S)$} of $S$ in $R$ is defined by
\begin{equation}
  \label{eq:13}
  \overbar{\M}_R(S)\cq \limsup_N\frac{\# S(N)}{\# R(N)}
\end{equation}
In case $R=\setN$, we suppress the base set in our notation, and we
write $\M(S)$ for $\M_{\setN}(S)$ and $\overbar{\M}(S)$ for
$\overbar{\M}_{\setN}(S)$.
\begin{thm}[label={thm:positive_density_rho}]{Connection between
    $\frac{\di \nu }{\di \mu_{R,\alpha}}$, $\M_R(S)$ and
    $\overbar{\M}_R(S)$}{}\rr
  Let $R$ be a sublacunary good set.
  \begin{thmenum}
  \item \label{item:1} For an irrational $\alpha$ let the unsigned
    function $\rho\in L^1\pa*{\mu_{R,\alpha}}$ with
    $\mu_{R,\alpha}(\rho)=1$ be bounded so
    $\norm{\rho}_{L^{\infty}\pa*{\mu_{R,\alpha}}}<\infty$.

    Then there is a good set $S\subset R$ representing the measure
    $\rho\cdot \mu_{R,\alpha}$ at $\alpha$ and satisfying
    $\M_R(S)=\frac1{\norm{\rho}_{L^{\infty}\pa*{\mu_{R,\alpha}}}}$.
  \item \label{item:2} Let $S$ be a good subset of $R$ with positive
    upper density in $R$, so $\overbar{\M}_R(S)>0$.

    Then for every irrational $\beta$ the limit measure
    $\mu_{S,\beta}$ is absolutely continuous with respect to
    $\mu_{R,\beta}$.  Furthermore, the Radon-Nikodym derivative
    $\rho_\beta\cq \frac{\di \mu_{S,\beta}}{\di \mu_{R,\beta}}$ is a
    bounded function satisfying
    $\norm{\rho_\beta}_{L^{\infty}\pa*{\mu_{R,\beta}}}\le
    \frac1{\overbar{\M}_R(S)}$.
  \end{thmenum}
\end{thm}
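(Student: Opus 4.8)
The plan is to prove the two parts by entirely different means: part~\ref{item:2} is a soft comparison estimate, while part~\ref{item:1} needs an explicit random construction.

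For part~\ref{item:2} I would work first at the level of intervals. Fix an irrational $\beta$ and an interval $I\subset\setT$. Since $\mu_{R,\beta}$ and $\mu_{S,\beta}$ are both continuous by \cref{thm:cont_only}, every interval is a continuity set of each, so goodness of $R$ and of $S$ gives $\setA_{r\in R(N)}\setone_I(r\beta)\to\mu_{R,\beta}(I)$ and $\setA_{s\in S(N)}\setone_I(s\beta)\to\mu_{S,\beta}(I)$. Writing $d_N\cq \#S(N)/\#R(N)$ and using $S\subset R$, I would start from
\[
\frac{\#\set{s\in S(N)}{s\beta\in I}}{\#R(N)}
=\pa*{\setA_{s\in S(N)}\setone_I(s\beta)}\,d_N
\le\frac{\#\set{r\in R(N)}{r\beta\in I}}{\#R(N)}.
\]
Taking $\limsup_N$ in the inequality between the first and last terms: the first term's $\limsup$ equals $\mu_{S,\beta}(I)\,\noner{S}$ (pulling the convergent nonnegative factor $\setA_{s\in S(N)}\setone_I(s\beta)\to\mu_{S,\beta}(I)$ out of the $\limsup$), while the last term converges to $\mu_{R,\beta}(I)$. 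Hence $\mu_{S,\beta}(I)\le\noner{S}^{-1}\mu_{R,\beta}(I)$ for every interval $I$. Passing from intervals to finite unions, then to open sets by countable subadditivity, and then to all Borel sets by outer regularity yields $\mu_{S,\beta}\le\noner{S}^{-1}\mu_{R,\beta}$ as measures, which gives both $\mu_{S,\beta}\ll\mu_{R,\beta}$ and $\norm{\rho_\beta}_{L^\infty(\mu_{R,\beta})}\le 1/\noner{S}$.

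For part~\ref{item:1} I would select $S$ randomly inside $R=(r_n)$. Put $C\cq\norm{\rho}_{L^\infty(\mu_{R,\alpha})}$, fix a representative with $0\le\rho\le C$, and keep $r_n$ in $S$ independently with probability $\rho(r_n\alpha)/C$, encoded by independent $\cbrace{0,1}$-valued indicators $\xi_n$. For $m\cq\#R(N)$ the Fourier average $\setA_{s\in S(N)}\e_p(s\alpha)$ is the ratio of $\sum_{n\le m}\xi_n\e_p(r_n\alpha)$ to $\sum_{n\le m}\xi_n$, whose expectations are $\tfrac1C\setA_{n\le m}(\rho\e_p)(r_n\alpha)$ and $\tfrac1C\setA_{n\le m}\rho(r_n\alpha)$. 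A concentration inequality for sums of independent bounded variables bounds the probability that either sum deviates from its mean by more than $\epsilon\,\#R(N)$ by $\exp(-c\epsilon^2\#R(N))$, and here the sublacunarity hypothesis $\log N/\#R(N)\to0$ is exactly what makes $\sum_N\exp(-c\epsilon^2\#R(N))$ converge, so Borel--Cantelli removes the randomness: almost surely $\setA_{s\in S(N)}\e_p(s\alpha)$ has the same limit as the deterministic ratio $\setA_{n\le m}(\rho\e_p)(r_n\alpha)\big/\setA_{n\le m}\rho(r_n\alpha)$, and $\M_R(S)=\lim_m\tfrac1C\setA_{n\le m}\rho(r_n\alpha)$.

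The hard part is that $\rho$ is merely bounded measurable, so these deterministic empirical averages need not converge: goodness of $R$ only gives weak convergence of $\setA_{n\le m}\delta_{r_n\alpha}$ to $\mu_{R,\alpha}$, hence convergence of $\setA_{n\le m}\psi(r_n\alpha)$ only for \emph{continuous} $\psi$, whereas $\rho\e_p$ is not. To get around this I would first prove part~\ref{item:1} for densities $h$ that are step functions $\sum_i c_i\setone_{I_i}$ with $0\le h\le C$: since $\mu_{R,\alpha}$ is continuous by \cref{thm:cont_only}, every interval $I_i$ is a continuity set, so $\setA_{n\le m}(h\e_p)(r_n\alpha)\to\mu_{R,\alpha}(h\e_p)$, the construction above runs verbatim, and one reads off $\M_R(S)=\mu_{R,\alpha}(h)/C$. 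I would then reach a general bounded $\rho$ by choosing step functions $h_k\to\rho$ in $L^1(\mu_{R,\alpha})$ with $0\le h_k\le C$ and $\mu_{R,\alpha}(h_k)=1$, building the corresponding sets on a rapidly growing sequence of consecutive blocks of $R$ so that the Cesàro average over each long terminal block is dominated by the current $h_k$. Controlling the tail of this splicing in the variation distance $\nvar{\,\cdot\,}$ shows the resulting set represents $\rho\cdot\mu_{R,\alpha}$, while each block contributes relative density $\mu_{R,\alpha}(h_k)/C\to 1/C$, giving $\M_R(S)=1/C$. I expect this final splicing step---hitting the exact target $\rho\cdot\mu_{R,\alpha}$ and the exact relative mean $1/C$ simultaneously, with variation-distance control of the transitions between blocks---to be the delicate point, the concentration estimate and part~\ref{item:2} being routine.
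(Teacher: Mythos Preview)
Your Part~\ref{item:2} is essentially the paper's argument: both establish $\mu_{S,\beta}\le\noner{S}^{-1}\mu_{R,\beta}$ by comparing averages along a subsequence realising the upper density, then read off absolute continuity and the $L^\infty$ bound on $\rho_\beta$. The paper works with nonnegative continuous $\phi$ rather than indicator functions of intervals, which spares the appeal to \cref{thm:cont_only}, but the two routes are interchangeable.

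In Part~\ref{item:1} there is a genuine gap: goodness of $S$ is never addressed. The theorem asks for a \emph{good} set, meaning $\lim_N\setA_{s\in S(N)}\e(s\beta)$ must exist for \emph{every} $\beta\in\setT$, not only for $\beta\in\setZ\alpha$. Your concentration step treats one frequency $p$ at a time and then intersects over the countably many $p\in\setZ$; this pins down $\mu_{S,\alpha}$ but says nothing about $\mu_{S,\beta}$ for $\beta$ rationally independent of $\alpha$. The paper's remedy is to prove concentration \emph{uniformly} in $\beta$: restrict $\beta$ to a net of size $\sim N^3$, pay a union bound, and use a Bernstein--Chernoff inequality to absorb the polynomial loss (this is \cref{lem:5}, packaged as \cref{prop:8}). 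Without that uniformity your random set need not be good.

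The splicing step inherits the same problem, and your order of operations makes it worse. The paper first manufactures a good bounded \emph{weight} $w$ representing $\rho$---approximating $\rho$ in $L^1(\mu_{R,\alpha})$ by continuous (equivalently, $\mu_{R,\alpha}$-Riemann integrable) $\rho_k$, setting $w_k(n)\cq\rho_k(r_n\alpha)$, and noting these are $\none{}$-Cauchy since $\none{w_k-w_l}=\mu_{R,\alpha}(|\rho_k-\rho_l|)$; see \cref{lem:4} and \cref{prop:7}---and only \emph{then} performs a single random selection, which transfers goodness from $w$ to $S$ at every $\beta$ simultaneously via \cref{prop:8}. You instead select a random set $S_k$ for each step function $h_k$ and splice the $S_k$. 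But independently sampled $S_k,S_l$ are not $\none{}$-close: two independent random subsets of $R$ of density $1/C$ differ on a set of relative density $2C^{-1}(1-C^{-1})$ regardless of how close $h_k$ and $h_l$ are, so \cref{prop:4} does not apply and the spliced set has no reason to be good at a generic $\beta$. Reversing the order (splice weights, then sample once) dissolves both difficulties; alternatively, coupling all the $S_k$ through a single i.i.d.\ uniform sequence $(U_n)$ via $\xi_n^{(k)}\cq\setone_{\{U_n\le h_k(r_n\alpha)/C\}}$ would restore $\none{}$-Cauchyness, but you would still need the uniform-in-$\beta$ concentration to know each $S_k$ is good.
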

We see that \cref{thm:general_representability} gives a \emph{full
  characterization} of the limit measure for sets with positive upper
mean\footnote{so now $R=\setN$}, giving an exact relationship between
the upper mean of the set and the bound of the RN derivative: On the
one hand if $\overbar{\M}(S)>0$, the limit measure $\mu_{S,\beta}$ for
every $\beta$ must be absolutely continuous with respect to $\lambda$
with bounded RN derivative $\rho_\beta$ satisfying
$\norm{\rho_\beta}_{L^{\infty}(\lambda)}\le \frac1{\overbar{\M}(S)}$.
On the other hand, any Borel probability measure $\nu$ which is
absolutely continuous with respect to $\lambda$ with bounded, nonzero
RN derivative $\rho$ is representable at any irrational $\alpha$ with
a set of positive mean satisfying
$\M(S)=\frac1{\norm{\rho}_{L^{\infty}\pa*{\lambda}}}$.

\Cref{item:2} has the following consequence.
\begin{cor}[label={conj:1}]{If the RN derivative $\rho$ is unbounded,
    then $\M_R(S)=0$ }{}\rr
  Let $R$ be a good set and $\alpha$ an irrational number. Suppose the
  unsigned function $\rho\in L^1\pa*{\mu_{R,\alpha}}$ with
  $\mu_{R,\alpha}(\rho)=1$ is unbounded, and that the good set
  $S\subset R$ represents the measure $\rho\cdot \mu_{R,\alpha}$ at
  $\alpha$.
  
  Then $S$ must have $0$ mean in $R$, so $\M_R(S)=0$.
\end{cor}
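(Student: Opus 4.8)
The plan is to prove the contrapositive of the statement packaged inside \cref{item:2} of \cref{thm:positive_density_rho}: rather than deducing a bound on a Radon--Nikodym derivative from positive relative density, I will deduce that the relative density vanishes from the fact that the derivative is unbounded. Concretely, it suffices to show that $\noner{S}=0$. Indeed, $\setA_{R(N)}\setone_S\ge 0$ for every $N$, so once $\noner{S}=\limsup_N\setA_{R(N)}\setone_S=0$ the nonnegative sequence $\pa*{\setA_{R(N)}\setone_S}_N$ must converge to $0$; hence the relative mean exists and $\M_R(S)=0$, which is the desired conclusion.

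To see that $\noner{S}=0$, I argue by contradiction and suppose $\noner{S}>0$. Since $\alpha$ is irrational, $S$ is good, and $\noner{S}>0$, the hypotheses of \cref{item:2} of \cref{thm:positive_density_rho} are met, and I may apply it with $\beta\cq\alpha$. It tells me that $\mu_{S,\alpha}$ is absolutely continuous with respect to $\mu_{R,\alpha}$ and that its Radon--Nikodym derivative $\rho_\alpha\cq\frac{\di\mu_{S,\alpha}}{\di\mu_{R,\alpha}}$ is a bounded function with
\begin{equation}
  \norm{\rho_\alpha}_{L^{\infty}\pa*{\mu_{R,\alpha}}}\le\frac1{\noner{S}}<\infty.
\end{equation}

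On the other hand, the hypothesis that $S$ represents $\rho\cdot\mu_{R,\alpha}$ at $\alpha$ means exactly that $\mu_{S,\alpha}=\rho\cdot\mu_{R,\alpha}$, this being already a probability measure since $\mu_{R,\alpha}(\rho)=1$. Comparing the two descriptions of $\mu_{S,\alpha}$ and using the $\mu_{R,\alpha}$-a.e.\ uniqueness of the Radon--Nikodym derivative, I obtain $\rho=\rho_\alpha$ $\mu_{R,\alpha}$-almost everywhere, so $\rho$ is bounded. This contradicts the assumption that $\rho$ is unbounded, i.e.\ that $\norm{\rho}_{L^{\infty}\pa*{\mu_{R,\alpha}}}=\infty$. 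Hence $\noner{S}=0$, and therefore $\M_R(S)=0$.

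There is essentially no hard step here: the corollary is precisely the contrapositive of \cref{item:2}, and the only points demanding care are bookkeeping ones --- that ``$\rho$ unbounded'' is to be read as $\norm{\rho}_{L^{\infty}\pa*{\mu_{R,\alpha}}}=\infty$, so that it is genuinely incompatible with the almost-everywhere identification $\rho=\rho_\alpha$, and that vanishing upper relative density upgrades to a genuine relative mean equal to $0$. If one prefers not to invoke \cref{item:2} at the single value $\beta=\alpha$ as a black box, the same contradiction can be extracted directly from the quantitative bound underlying its proof, but applying the already-established statement is the cleanest route.
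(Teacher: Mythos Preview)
Your proof is correct and is exactly the argument the paper has in mind: the corollary is stated immediately after \cref{item:2} of \cref{thm:positive_density_rho} as a direct consequence, and your contrapositive reading of that item (together with the observation that $\noner{S}=0$ forces $\M_R(S)=0$ for nonnegative averages) is precisely how that consequence is obtained.
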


\subsection{Weighted averages}
\label{sec:weighted-averages}

Our results in
\cref{thm:abs_cont_reprable,thm:general_representability} will be
consequences, via a random procedure, of results on weighted averages.

We need to fix some terminology and notation.  We define the
Besicovitch type seminorm $\none{ }$ for all complex valued sequences
$f\in\setC^{\setN}$ by
\begin{equation}
  \label{eq:14}
  \none{f}\cq \limsup_N\setA_{[1,N]}|f|, \quad f\in\setC^{\setN}
\end{equation}
The number $\mathbf 1$ in the subscript of $\none{}$ expresses the
similarity of this norm to the $L^1$ norm.

For a set $S\subset \setN$, we may use the notation $\none{S}$ instead
of $\none{\setone_S}$, though in this case we do not get a new
concept, since $\none{S}=\overbar{\M}(S)$.

For an infinite set $R\subset \setN$ we define the relative
$\mathbf 1$-norm $\noner{f }$ of a complex valued $R\to\setC$ function
by
\begin{equation}
  \label{eq:15}
  \noner{f}\cq \limsup_N\setA_{R(N)}|f|, \quad f\in\setC^{R}
\end{equation}
If the set $R$ is given as a strictly increasing sequence $(r_n)$ and
for an $f\in\setC^{R}$ we define $F$ by $F(n)\cq f(r_n)$, then
$\noner{f}=\none{F}$.

Let $R\subset \setN$ be an infinite set. The $R\to \setR$ function $w$
is called a \emph{$R$-weight} if $w$ is unsigned, so $w\ge 0$, and
$\sum_{r\in R}w(r)=\infty$. We may refer to an $R$-weight as ``a
weight supported on $R$''.

An $R$-weight $w$ can be considered a measure on the set $R$ and in
that case for $S\subset R$ we may briefly write $w(S)$ for the sum
$\sum_{s\in S}w(s)$.

For a finite set $S\subset \setN$ let $\sigma$ be a real valued,
unsigned function defined on $S$.  We can consider $\sigma$ a measure
on $S$, and as such, we assume $\sigma(S)>0$. For a vector space $V$
and $S\to V$ function $f$, define the $\sigma$-weighted average
$\setA^\sigma_Sf$ of $f$ on $S$ by
\begin{equation}
  \label{eq:16}
  \setA^\sigma_{S}f=\setA^\sigma_{s\in S}f(s)\cq
  \frac1{\sigma(S)}\sum_{s\in S}\sigma(s)f(s)
\end{equation}
\begin{defn}[label={defn:3}]{Good weights and represented measures by
    them}{}\rr
  Let $R\subset \setN$ be infinite.  Let $w$ be an $R$-weight.

  We say $w$ is a \emph{good $R$-weight} if the weak limit
  $\lim_N \setA^w_{r\in R(N)}\delta_{r\beta}$ exists for every
  $\beta\in\setT$.  We denote this limit by $\mu_{w,\beta}$,
  \begin{equation}
    \label{eq:17}
    \mu_{w,\beta}\cq \lim_N\setA^w_{r\in R(N)}\delta_{r\beta} 
  \end{equation}
  Let $\nu$ be a Borel probability measure on $\setT$ and let
  $\alpha\in\setT$.

  We say the \emph{$R$-weight $w$ represents $\nu$ at $\alpha$} if $w$
  is good and $\mu_{w,\alpha}=\nu$.
  
\end{defn}
\marginnote{Note the following form of the definition of the limit
  measure $\mu_{w,\alpha}$ when  we consider $R$ as the strictly
  increasing sequence $(r_n)$:
  $\mu_{w,\alpha}=\lim_N\setA^w_{n\in [1,N]}\delta_{r_n\beta}$, so now
  we have
  $\setA^w_{n\in
    [1,N]}\delta_{r_n\beta}=\frac1{\sum_{n\in[1,N]}w(r_n)}\sum_{n\in[1,N]}w(r_n)\delta_{r_n\alpha}$.}

Note the following characterization of good weights: The $R$-weight
$w$ is good iff the limit $\lim_N\setA^w_{r\in R(N)}\e(r\alpha)$
exists for every $\alpha$.

In the special case of a good set $S\subset \setN$, we have
$\mu_{S,\alpha}=\mu_{\setone_S,\alpha}$ since the weighted averages
with weight $w\cq \setone_S$ correspond to the averages along $S$.

In contrast to good sets, the representation of absolutely continuous
measures by weights can always be accomplished by weights with
positive, finite mean. In fact, the representing weight has an
additional property.
\begin{defn}[label={defn:integrable_weight}]{Integrable weight}{}\rr
  Let $R\subset \setN$ be infinite.

  We call the $R$-weight $w$ \emph{integrable} if it can be
  approximated arbitrary closely in the seminorm $\noner{}$ by
  bounded, good weights: for every $\epsilon>0$ there is a good
  $R$-weight $v$ with $\ninfty{v}<\infty$ so that
  $\noner{v-w}<\epsilon$.
\end{defn}

\begin{thm}[label={thm:representation_by_weights}]{Representation by
    weights}{}\rr 
  Let $R$ be a good set.

  \begin{thmenum}
  \item \label{item:3} For an irrational $\alpha$ let the unsigned
    function $\rho\in L^1\pa*{\mu_{R,\alpha}}$ satisfy
    $\mu_{R,\alpha}(\rho)=1$.

    Then there is an integrable $R$-weight $w$ with $\M_R(w)=1$ which
    represents the measure $\rho\cdot \mu_{R,\alpha}$ at $\alpha$. If
    $\rho\in L^\infty\pa*{\mu_{R,\alpha}}$ then the $R$-weight $w$
    representing the measure $\rho\cdot \mu_{R,\alpha}$ can also
    satisfy $\norm*{\rho}_{L^\infty\pa*{\mu_{R,\alpha}}}=\ninfty{w}$.

  \item \label{item:4} Let $w$ be a good, integrable $R$-weight which
    satisfies $\noner{w}>0$.

    Then for every $\beta$ the limit measure $\mu_{w,\beta}$ is
    absolutely continuous with respect to $\mu_{R,\beta}$.
  \end{thmenum}
\end{thm}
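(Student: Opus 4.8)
The plan is to prove \cref{item:3} by pulling the density $\rho$ back along the orbit $(r_n\alpha)$ and \cref{item:4} by a domination estimate, the common engine being the following joint‑distribution observation. Since $R$ is good, for an irrational $\alpha$ and an \emph{arbitrary} $\beta\in\setT$ the pair sequence $(r_n\alpha,r_n\beta)$ has an asymptotic joint distribution $\Lambda_{\alpha,\beta}$ on $\setT^2$: indeed $\e_p(r_n\alpha)\e_q(r_n\beta)=\e\pa*{r_n(p\alpha+q\beta)}$, so goodness of $R$ at the single frequency $p\alpha+q\beta$ makes every mixed exponential average $\setA_{n\in[N]}\e_p(r_n\alpha)\e_q(r_n\beta)$ converge, and by Weierstrass there is a measure $\Lambda_{\alpha,\beta}$ with $\setA_{n\in[N]}F(r_n\alpha,r_n\beta)\to\Lambda_{\alpha,\beta}(F)$ for every continuous $F$. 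Its first marginal is $\mu_{R,\alpha}$ and $\Lambda_{\alpha,\alpha}$ is the push‑forward of $\mu_{R,\alpha}$ under the diagonal embedding. As a warm‑up for \cref{item:3}, if $\rho$ is continuous I would set $w(r_n)\cq\rho(r_n\alpha)$; applying the joint convergence to $F=\rho\otimes\phi$ gives $\setA^w_{n\in[N]}\phi(r_n\beta)\to\Lambda_{\alpha,\beta}(\rho\otimes\phi)$ (the denominator tends to $\mu_{R,\alpha}(\rho)=1$), so $w$ is a bounded good weight with $\mu_{w,\alpha}=\rho\cdot\mu_{R,\alpha}$ and $\M_R(w)=\mu_{R,\alpha}(\rho)=1$.

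The obstacle is that a general $\rho\in L^1(\mu_{R,\alpha})$ is defined only $\mu_{R,\alpha}$-almost everywhere, so the pointwise recipe $w(r_n)=\rho(r_n\alpha)$ is meaningless: the whole orbit may sit inside a $\mu_{R,\alpha}$-null set. To circumvent this I would pick continuous nonnegative $\rho_k$ with $\mu_{R,\alpha}(\rho_k)=1$ and $\mu_{R,\alpha}(|\rho-\rho_k|)<2^{-k}$, and build $w$ in fast‑growing index blocks $(N_{k-1},N_k]$ by putting $w(r_n)\cq\rho_k(r_n\alpha)$ on the $k$-th block. With $N_k/N_{k-1}\to\infty$ the running averages are dominated by the current block, giving $\setA_{n\in[N]}w\to1$ and $\mu_{w,\beta}(\phi)=\lim_k\Lambda_{\alpha,\beta}(\rho_k\otimes\phi)$; the latter limit exists because $\abs*{\Lambda_{\alpha,\beta}(\rho_k\otimes\phi)-\Lambda_{\alpha,\beta}(\rho_j\otimes\phi)}\le\norm{\phi}_\infty\,\mu_{R,\alpha}(|\rho_k-\rho_j|)$, so $w$ is good, $\M_R(w)=1$, and at $\beta=\alpha$ one gets $\mu_{w,\alpha}=\rho\cdot\mu_{R,\alpha}$. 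Integrability follows by comparing $w$ to the bounded good weight $v_K(r_n)\cq\rho_K(r_n\alpha)$: on each block $k\ge K$ the block‑average of $\abs*{\rho_k(r_n\alpha)-\rho_K(r_n\alpha)}$ tends to $\mu_{R,\alpha}(|\rho_k-\rho_K|)<2^{-K+1}$ because both functions are continuous, while the finitely many blocks $k<K$ are a vanishing proportion of $[N]$; hence $\none{w-v_K}\to0$. When $\rho\in L^\infty$ I would choose the $\rho_k$ with $\norm{\rho_k}_\infty\le\norm{\rho}_{L^\infty\pa*{\mu_{R,\alpha}}}$, so that $\ninfty w\le\norm{\rho}_{L^\infty\pa*{\mu_{R,\alpha}}}$; the reverse inequality is furnished by the bound of \cref{item:4} below, giving the asserted equality.

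For \cref{item:4} the decisive device is a limsup pull‑out. Let $w$ be integrable with $\noner w>0$, fix $\phi\in\mathcal{C}_+$ and a bounded good weight $v$ with $\none{w-v}<\epsilon$. From $w(r_n)\phi(r_n\beta)\le\ninfty v\,\phi(r_n\beta)+|w-v|(r_n)$ and $\setA_{n\in[N]}w\,\phi(r_n\beta)=\setA^w_{n\in[N]}\phi(r_n\beta)\cdot\setA_{n\in[N]}w$, taking $\limsup_N$ and pulling the convergent factor $\setA^w_{n\in[N]}\phi(r_n\beta)\to\mu_{w,\beta}(\phi)\ge0$ out of the limsup yields
\[
  \mu_{w,\beta}(\phi)\,\noner w\le\ninfty v\,\mu_{R,\beta}(\phi)+\none{w-v}.
\]
Now if $\mu_{R,\beta}(B)=0$, outer regularity gives an open $U\supseteq B$ with $\mu_{R,\beta}(U)<\delta$ and a $\phi\in\mathcal{C}_+$ with $\setone_B\le\phi\le\setone_U$; the displayed bound gives $\mu_{w,\beta}(B)\le(\ninfty v\,\delta+\epsilon)/\noner w$, and letting first $\delta\to0$ and then $\epsilon\to0$ (with $v$ re‑chosen each time) forces $\mu_{w,\beta}(B)=0$. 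Hence $\mu_{w,\beta}\ll\mu_{R,\beta}$.

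I expect the genuine difficulty to lie entirely in \cref{item:3}: because $\rho$ exists only $\mu_{R,\alpha}$-almost everywhere it cannot be sampled along the orbit, and the block construction must be tuned so that the running (not merely terminal) averages track $\mu_{R,\alpha}(\rho_k\,\cdot\,)$ while the single bounded approximant $v_K$ stays $\none{}$-close to $w$; this requires the simultaneous calibration of the block growth $N_k/N_{k-1}$, the $L^1$-rate $2^{-k}$, and the partial block averages. By contrast \cref{item:4} is comparatively soft once the limsup pull‑out is in place, the only subtle point being that one must work with the upper density $\noner w$ rather than a mean, which is exactly why the hypothesis is stated as $\noner w>0$.
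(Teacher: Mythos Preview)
Your strategy coincides with the paper's: for \cref{item:3} both arguments start from the pointwise pullback $w(r_n)=\rho(r_n\alpha)$ when $\rho$ is continuous, then pass to general $\rho\in L^1(\mu_{R,\alpha})$ by $L^1$-approximating with continuous $\rho_k$ and pasting the corresponding weights together on growing index blocks. The paper packages this as a Marcinkiewicz-type completeness statement (\cref{lem:4}) combined with \cref{lem:2} and \cref{prop:7}; your joint-distribution $\Lambda_{\alpha,\beta}$ is a pleasant bookkeeping device that the paper leaves implicit in the calculations of \cref{sec:basic-example-repr}. For \cref{item:4} the paper takes a slightly different route: instead of your single limsup pull-out, it first records the domination $\mu_{w_k,\beta}\le\frac{\ninfty{w_k}}{\none{w_k}}\mu_{R,\beta}$ for each bounded approximant $w_k$ and then invokes \cref{lem:3} to get $\nvar{\mu_{w_k,\beta}-\mu_{w,\beta}}\to0$, which transfers absolute continuity to the limit. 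Your argument is marginally more direct and avoids \cref{lem:3}; the paper's has the advantage of simultaneously yielding the $L^1$-convergence of the densities.

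One caution on \cref{item:3}: the sentence ``with $N_k/N_{k-1}\to\infty$ the running averages are dominated by the current block'' is not literally true for $N$ just beyond $N_{k-1}$, where the current block contributes almost nothing and it is the previous block that must already be close to the target. You flag exactly this in your final paragraph, and the paper's \cref{lem:4} together with \cref{lem:2} is precisely the mechanism that handles it; your outline would need to make the recursive choice of $N_k$ depend on how well the partial averages of the \emph{earlier} $w_j$ have already stabilised, not merely on the ratio $N_k/N_{k-1}$.
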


\subsection{Applications in ergodic theory}
\label{sec:appl-ergod-theory}

Besides the intrinsic interest of our main question, \cref{quest:1},
there may be several applications of studying limit measures.  One
major application is in ergodic theory.

Recall that a measure preserving dynamical system is a probability
space $(X,\m)$, where $\m(X)=1$, equipped with a measurable, measure
preserving transformation $T$ of $X$.  By the spectral theorem, a good
set has the following characterization: the sequence $S=(s_n)$ of
positive integers is good iff the limit
$\lim_N\setA_{n\in[1,N]}f(T^{s_n}x)$ exists in $L^2(X)$-norm in any
measure preserving dynamical system $(X,\m,T)$ for any $f\in L^2(X)$.

This means that our work in describing the possible limit measures in
case of a good set yields an identification of the limit in mean
ergodic theorems.  Identification of the limit is often the crucial
step in some applications, and here we just mention two of these,
recurrence and almost sure convergence.  In case of studying
recurrence, the identification of the limit readily tells us whether a
given set is a set of recurrence.  In case of trying to see if some
ergodic averages converge almost everywhere, after the identification
of the $L^2$-limit, we usually want to see if there is some kind of
rate with which the averages converge to the $L^2$-limit.  For
example, this is the case when one proves that the ergodic averages
along the squares converge almost surely.  The application of the
circle method here is exactly a quantitative expression of how the
averages converge in $L^2$-norm.

\subsection{Future work}
\label{sec:future-work}

The techniques developed in this paper allow one to address the
\emph{simultaneous} representability of probability measures at
several different points of the torus, and we plan to explore this in
a future work. But which family $\set*{\nu_{\alpha}}{\alpha\in\setT}$
of measures can be represented by a single good set remains open even
if we restrict the family to absolutely continuous measures with
respect to the Lebesgue probability measure $\lambda$.  What we can
say at this point is that for a given good set $S$, the set of
$\alpha\in\setT$ where the limit measure $\mu_{S,\alpha}$ is not the
Lebesgue measure is small: it is both of first Baire category and of
$0$ measure under every Rajchman measure\autocites[Theorem
3]{MR799255}[see also][]{MR1364897} on $\setT$, so
$\nu\set{\alpha}{\mu_{S,\alpha}\ne \lambda}=0$ for every Rajchman
measure $\nu$.

\subsection{Summary of notation}
\label{sec:summary-notation}

We realize that we use quite extensive notation, many of which are
new, so we give a summary of our notations in \cref{table:notations}.

\begin{table}[h!]
  \rowcolors{3}{gray!30}{white}
  \caption{Notations}
  \bigskip
  \begin{tabular}{llll}
    \toprule
    Symbol           & Definition                                     & Parameters                              & Name                             \\
    \midrule
  $\setN$            & $\{1,2,3,\dots\}$                              &                                         & Natural numbers                  \\    
  $\setT$            &                                                &                                         & torus                            \\
    $\lambda$        &                                                &                                         & Haar-Lebesgue measure on $\setT$ \\
    $\e(\theta)$     & $\exp(2\pi i \theta)$                          & $\theta\in \setT$                       &                                  \\
    $\e^p(\theta)$   & $\e(p\theta)$                                  & $p\in \setZ$                            &                                  \\
    $S(N)$           & $S\cap[1,N]$                                   & $S\subset\setN$                         & initial segment of $S$           \\
    $\#S(N)$         & $\sum_{s\in S(N)}1$                            & $S\subset\setN$                         & counting function of $S$         \\
    $\setA_{S}f$     & $\frac1{\#S}\sum_{s\in S}f(s)$                 & set $S$ is finite                       & average of $f$ on $S$            \\
    $\setA^w_{S}f$   & $\frac1{w(S)}\sum_{s\in S}w(s)f(s)$
                     & $w$ is a weight on $S$                         & $w$-average of $f$ on set $S$                                              \\
    $\mu_{S,\alpha}$ & $\lim_N \setA_{s\in S(N)}\delta_{s\alpha}$     & $S\subset\setN$, $\alpha\in\setT$       & limit measure of $S$ at $\alpha$ \\
    $\mu_{w,\alpha}$ & $\lim_N \setA^w_{s\in S(N)}\delta_{s\alpha}$ & weight $w$ on $S$, $\alpha\in\setT$ & limit measure of $w$ at $\alpha$ \\
    $\nu(\phi)$      & $\int_\setT \phi\di\nu$ &  &                                  \\
    $\M(f)$          & $\lim_N\setA_{[1,N]}f$   & $f\in\setC^{\setN}$                     & mean of $f$   \\
    $\M_R(f)$        & $\lim_N\setA_{R(N)}f$  & $R\subset\setN$, $f\in\setC^{R}$ & relative mean of $f$ \\
    $\mathcal{M}$   & $\set{f}{f\in\setC^{\setN}, \M(f)\text{ exists and is finite}}$ & & sequences with mean \\
  $\nmean{f}$  & $\limsup_N\abs*{\setA_{[1,N]}f}$& $f\in\setC^{\setN}$ & upper mean  \\
      $\nmeanr{f}$  & $\limsup_N\abs*{\setA_{R(N)}f}$& $R\subset \setN$, $f\in\setC^{R}$ & relative upper mean  \\
    $\none{f}$ & $\limsup_N\setA_{[1,N]}|f|$    & $f\in\setC^{\setN}$  & $\mathbf 1$-seminorm          \\
    $\noner{f}$  & $\limsup_N\setA_{R(N)}|f|$  & $R\subset \setN$, $f\in\setC^{R}$ & relative $\mathbf 1$-seminorm \\                           
    $\mathcal{C}_+$    & $\set*{\phi}{\phi:\setT\to[0,1], \text{ continuous }}$          &   & \\
    $\nvar{\nu_1-\nu_2}$ & $\sup_{\phi\in\mathcal{C}_+}\abs*{\nu_1(\phi)-\nu_2(\phi)}$     & $\nu_i$ finite Borel  measures  on $\setT$ & variation distance            \\
    \bottomrule
  \end{tabular}
  

  \medskip

  \label{table:notations}
\end{table}

\section{Basic example for representation}
\label{sec:basic-example-repr}

In this section we want to work out a rather simple but instructive
example, which will then motivate and form the basis of many of our
constructions later on.  When we are done with presenting this
example, we in fact proved \cref{thm:general_representability} in case
the Radon-Nikodym derivative is the indicator of a Jordan measurable
set.

Let $\alpha$ be irrational and let $I\subset\setT$ be an interval. We
want to show that if a probability measure $\nu$ is absolutely
continuous with respect to $\lambda$ with the Radon-Nikodym derivative
equal $\setone_I$, the indicator of $I$, then there is a set $S$ which
represents $\nu$ at $\alpha$. Probably the simplest way\footnote{We
  could also define such a set by taking
  $\set{n}{n\in\setN,n^2\alpha\in I\pmod 1}$ or
  $\set{p}{p\in\mathcal{P},p\alpha\in I\pmod 1}$ where $\mathcal{P}$
  is the set of primes.} to define such a set $S$ is by taking
\begin{equation}
  \label{eq:18}
  S=\set{n}{n\in\setN,n\alpha\in I}
\end{equation}
There are two things to verify.  First, that $S$ is indeed a good set,
and to do that, we need to show that the weak limit
$ \mu_{S,\beta}=\lim_N\setA_{s\in S(N)}\delta_{s\beta}$ exists for
every $\beta$.  Second, we then have to verify that
$ \mu_{S,\alpha}=\frac1{\lambda(I)}\setone_I\cdot \lambda$.  The
second one, in fact, is almost instantaneous to do since it follows
from the uniform distribution of $(n\alpha)_{n\in \setN}\pmod 1$. To
see how it follows, it's enough to show that for every interval
$J\subset\setT$ we have
$ \mu_{S,\alpha}(J)=\lambda\pa*{\setone_J\cdot
  \frac1{\lambda(I)}\setone_I}$, that is
\begin{equation}
  \lim_N\setA_{s\in S(N)}\setone_J(s\alpha)=\frac1{\lambda(I)}\lambda(J\cap I)
\end{equation}
The left hand side can be written as
\begin{align*}
  \lim_N\setA_{s\in S(N)}\setone_J(s\alpha)
  &=\lim_N\frac{N}{\#S(N)}\setA_{n\in
    [1,N]}\setone_I(n\alpha)\setone_J(n\alpha)\\
  \intertext{since $\lim_N\frac{\#S(N)}N=\lambda(I) $ by the uniform distribution of  $(n\alpha)_{n\in
  \setN}$ $\pmod 1$,}
  &=\frac1{\lambda(I)} \lim_N\setA_{n\in[1,N]}\setone_{I\cap J}(n\alpha)\\
  \intertext{again by the unifom distribution of $(n\alpha)_{n\in\setN}\pmod 1$}
  &=\frac1{\lambda(I)}\lambda(I\cap J)
\end{align*}
To show that the weak limit
$ \mu_{S,\beta}=\lim_N \setA_{s\in S(N)}\delta_{s\beta}$ exists for
every $\beta$, it's enough to show that
$\lim_N\setA_{s\in S(N)}\e(s\beta)$ exists for every $\beta$. Since
\begin{equation}
  \setA_{s\in S(N)}\e(s\beta)
  =\frac{N}{\#S(N)}\setA_{n\in[1,N]}\setone_I(n\alpha)\e(n\beta)
\end{equation}
and since $\lim_N\frac{\#S(N)}N=\lambda(I) $, it's enough to show that
the limit $\lim_N\setA_{n\in[1,N]}\setone_I(n\alpha)\e(n\beta)$ exists
for every $\beta\in\setT$. To see this, first note that if we replace
$\setone_I$ by the character $\e^k$ the limit of
$\setA_{n\in [1,N]}\e^k(n\alpha)\e(n\beta)=\setA_{n\in
  [1,N]}\e\paren*{n(k\alpha+\beta)}$ as $N\to\infty$ exists and is as
follows
\begin{equation}
  \label{eq:19}
  \lim_N\setA_{n\in [1,N]}\e^k(n\alpha)\e(n\beta)=
  \begin{cases*}
    1& if $\beta=-k\alpha\pmod 1$\\
    0&otherwise
  \end{cases*}
\end{equation}
From this we get that if we replace $\setone_I$ by a trigonometric
polynomial $\phi$, the limit of
$\setA_{n\in [1,N]}\phi(n\alpha)\e(n\beta)$ exists and can be given
explicitly as\footnote{Notice that in \cref{eq:20}
  $\lambda\pa*{\phi\e^{k}}$ is the $k$th Fourier coefficient of
  $\phi$.}
\begin{equation}
  \label{eq:20}
  \lim_N\setA_{n\in[1,N]}\phi(n\alpha)\e(n\beta)=
  \begin{cases*}
    \lambda\pa*{\phi\e^{k}}& if $\beta=-k\alpha\pmod 1$\\
    0&otherwise
  \end{cases*}
\end{equation}
Using Weierstrass' theorem on being able to uniformly approximate a
continuous function by trigonometric polynomials, we can verify that
in \cref{eq:20} we can take $\phi$ to be any continuous function.

Now, to go from continuous functions to the indicator $\setone_I$ of
any interval $I$, it is enough to know that the indicator $\setone_I$
can be sandwiched between two unsigned continuous functions whose
integrals (with respect to $\lambda$) are arbitrarily close.  We thus
have
\begin{equation}
  \label{eq:21}
  \lim_N\setA_{n\in[1,N]}\setone_I(n\alpha)\e(n\beta)=
  \begin{cases*}
    \lambda(\setone_I\e_{k})& if $\beta=-k\alpha\pmod 1$\\
    0&otherwise.
  \end{cases*}
\end{equation}
We finally get, since
$
\mu_{S,\beta}(\e)=\frac1{\lambda(I)}\lim_N\setA_{n\in[1,N]}\setone_I(n\alpha)\e(n\beta)$,
\begin{equation}
  \label{eq:22}
  \mu_{S,\beta}(\e)=
  \begin{cases*}
    \frac1{\lambda(I)}\lambda(\setone_I\e^{k})& if $\beta=-k\alpha\pmod 1$\\
    0&otherwise
  \end{cases*}
\end{equation}
The above shows that $ \mu_{S,\beta}(\e)$ can be nonzero only if
$\beta$ is an integer multiple of $\alpha$, and we recognize
$\lambda\paren*{\setone_I\e^{k}}$ as the $k$th Fourier coefficient of
the function $\setone_I$, that is,
$\frac1{\lambda(I)}\lambda\paren*{\setone_I\e^{k}}$ is the $k$th
Fourier coefficient of the measure
$\frac1{\lambda(I)}\setone_I\lambda$.

One can rather easily extend this example in two ways.  First, the
proof can be repeated almost verbatim for the case when we take any
Jordan measurable set $B$ in place of the interval $I$. Indeed, all we
need to remark is that a set $B$ is Jordan measurable iff, for every
given $\epsilon>0$, its indicator function $\setone_B$ can be
sandwiched between two unsigned, continuous functions $\phi_a$ and
$\phi_b$ so that $\phi_b\le \setone_B\le \phi_a$ and
$\lambda\pa*{\phi_a-\phi_b}<\epsilon$. Another way of expressing that
the indicator of a set can be sandwiched between two continuous
functions is that the boundary of the set has zero Lebesgue measure.
\begin{defn}[label={defn:4}]{$\nu$-Riemann integrability}{}\rr
  Let $\nu$ be a finite Borel measure on $\setT$ and let $\phi$ be a
  Borel measurable $\setT\to\setC$ function.

  We call the function $\phi$ \emph{$\nu$-Riemann} integrable if it's
  continuous at $\nu$-almost every point.

  We call the Borel measurable set $B$ \emph{\emph{$\nu$}-Jordan
    measurable} if its indicator function $\setone_B$ is $\nu$-Riemann
  integrable.
\end{defn}

As it is well known, the equivalence of approximability by continuous
functions and the boundary having zero measure carries over to the
setting of any finite Borel measure on the torus.  We can thus extend
the example to the setting when the Lebesgue measure is replaced by an
arbitrary finite Borel measure.

We record our findings in the following result.
\begin{prop}[label={prop:1}]{The Radon-Nikodym derivative can be the
    indicator of a Jordan measurable set}{}\rr
  Let $R$ be a good set, $\alpha$ be an irrational number and let
  $B\subset \setT$ be $\mu_{R,\alpha}$-Jordan measurable with
  $\mu_{R,\alpha}(B)>0$.

  Then the measure $\setone_B\mu_{R,\alpha}$, which is absolutely
  continuous with respect to $\mu_{R,\alpha}$, can be represented at
  $\alpha$ by the good set $S$ defined by
  \begin{equation}
    \label{eq:23}
    S\coloneqq \set*{r}{r\in R, r\alpha\in B}
  \end{equation}
  so we have
  $ \mu_{S,\alpha}=\frac{1}{ \mu_{R,\alpha}(B)}\setone_B
  \mu_{R,\alpha}$.  We also have $\mu_{R,\alpha}(B)=\M_R(S)$.
\end{prop}

Let us go back to trying to represent measures which are absolutely
continuous with respect to the Lebesgue measure $\lambda$. New ideas
are needed to cover the case when we want to represent the measure
$\setone_B\lambda$ when $B$ is a Borel set which is not Jordan
measurable. What is the new difficulty?  We'd like to think that we
could just again take the ``visit set''
$S=\set{n}{n\in\setN,n\alpha\in B}$, but this is not the case anymore.
Indeed, take $B$ to be an open set with $\lambda(B)<1$ and containing
all integer multiples of our irrational $\alpha$. This open set is not
Jordan measurable anymore.  The set $S$ cannot represent the measure
$\setone_B\lambda$ anymore since $S=\setN$. In fact, we show in
\cref{sec:open-set-u} that for any given irrational $\alpha$, one can
construct an open set $B$ so that the visit set of $B$ doesn't even
have mean.  So we definitely need new ideas.

We also need new ideas even for the case when we try to represent a
measure which is absolutely continuous with respect to the Lebesgue
measure with a Radon-Nikodym derivative which is not an indicator
function.  We need these new ideas even if this Radon-Nikodym
derivative is a continuous function.

\section{Proof of \cref{thm:general_representability} for indicators}
\label{sec:proof-}
Strictly speaking, we have already begun the proof of
\cref{thm:general_representability} in the previous section, when we
proved that at an irrational number every measure with Jordan
measurable Radon-Nikodym derivative can be represented.  Our fixed set
up in this section is that we are given a good ``base'' set
$R\subset \setN$ and an irrational number $\alpha$. Since the set $R$
is fixed throughout the section, we suppress the set $R$ from our
notation for the limit measure,
\begin{equation}
  \label{eq:24}
  \mu_\beta\cq \mu_{R,\beta}, \fe \beta
\end{equation}

Since our focus is to widen the class of the Radon-Nikodym derivatives
with respect to the base limit measure $\mu_\alpha$, the following
definition will simplify our language.
\begin{defn}[label={defn:5}]{Representing a function, a Borel
    set}{}\rr
  Let $\rho\in L^1\pa*{\setT,\mu_\alpha}$ be unsigned and
  $\mu_\alpha(\rho) >0$.

  We say that the good set \emph{$S\subset R$ represents $\rho$ at
    $\alpha$} if it represents the measure $\rho\cdot \mu_{\alpha}$,
  that is,
  $\mu_{S,\alpha}=\frac1{\mu_{\alpha}(\rho)}\rho\cdot \mu_{\alpha}$.

  If $\rho$ is the indicator of a Borel measurable set
  $B\subset \setT$, we then say \emph{$S$ represents $B$ at $\alpha$}.
\end{defn}
The sets $S\subset R$ we consider in this section have positive mean
in $R$.  For such a set, the non-normalized averages
$\setA_{n\in[1,N]}\setone_S(r_n)\delta_{r_n\beta}$ are easier to
handle than the normalized ones $\setA_{s\in S(N)}\delta_{s\beta}$.
The convergence or divergence properties of the two averages are
identical since they are connected by
\begin{equation}
  \label{eq:25}
  \lim_N\setA_{n\in[1,N]}\setone_S(r_n)\delta_{r_n\beta}
  =\M_R(S)\lim_N\setA_{s\in S(N)}\delta_{s\beta}
\end{equation}
as can be seen from writing
$\setA_{s\in S(N)}\delta_{s\beta}=\frac{\# R(N)}{\# S(N)}\setA_{r\in
  R(N)}\setone_S(r)\delta_{r\beta}$ and noting that
$\lim_N\frac{\# S(N)}{\# R(N)}=\M_R(S)$ and
$\lim_N \setA_{r\in R(N)}\setone_S(r)\delta_{r\beta}=\lim_N
\setA_{n\in[1,N]}\setone_S(r_n)\delta_{r_n\beta}$.

In \cref{sec:basic-example-repr} we proved that if $B$ is
$\mu_\alpha$-Jordan measurable, then it can be represented by the set
$S_B$ defined by
\begin{equation}
  \label{eq:26}
  S_B=\set*{r_n}{r_n\alpha\in B}
\end{equation}
and we have the relation
\begin{equation}
  \label{eq:27}
  \M_R(S_B)=\mu_\alpha(B)
\end{equation}
We also indicated that this definition of $S_B$ may not give a good
set if $B$ is not Jordan measurable. The idea of extending the
representation to any Borel measurable set is via a limit procedure.
To explain what we mean by ``a limit procedure'', consider the case
when $B$ is an open set, and write it as a disjoint union of open
intervals, $B=\cup_jI_j$. Defining $B_k\cq \cup_{j\in[1,k]}I_j$ for
every $k\in\setN$, each $B_k$ is Jordan measurable and the sequence
$(B_k)$ increases monotonically to $B$. We have
$\lim_k\mu_\alpha(B_k)=\mu_\alpha(B)$. Denoting $S_k\cq S_{B_k}$, the
sequence $(S_k)$ also increases to a set $S\subset R$, but $\M_R(S)$
not only may not be equal $\lim_k\M_R(S_k)$ but $\M_R(S)$ may not even
exist\footnote{See also \cref{sec:open-set-u}.}.  The limit procedure
which is suitable for our purposes is determined by the seminorm
$\none{f}$ which is defined by
\begin{equation}
  \label{eq:28}
  \none{f}\cq \limsup_N\setA_{[1,N]}|f(n)|, \quad f\in \setC^{\setN}
\end{equation}

Our main tools will be two lemmas.  The first one is modeled after a
result of Marcinkiewicz\autocite{marcinkiewicz1939remarque} on the
completeness of Besicovitch spaces.
\begin{lem}[label={lem:1}]{Cauchy sequence is convergent in the
    seminorm $\none{}$}{}\rr
  For each $k\in\setN$, let $f_k\in\setC^{\setN}$.  Suppose that
  $(f_k)$ is a Cauchy sequence in the seminorm $\none{}$, so we have
  \begin{equation}
    \label{eq:29}
    \lim_k\sup_{l\ge k}\none{f_l-f_k}=0
  \end{equation}

  Then there is $f\in \setC^{\setN}$ satisfying
  \begin{equation}
    \label{eq:30}
    \lim_k\none{f_k-f}=0
  \end{equation}
  The $f$ in \cref{eq:30} is pasted together from the $f_k$'s in the
  following way: there are indices $N_1<N_2<\dots$ so that $f=f_k$ on
  the interval $(N_k,N_{k+1}]$,
  \begin{equation}
    \label{eq:31}
    f=\sum_kf_k\cdot\setone_{(N_k,N_{k+1}]}
  \end{equation}
\end{lem}
\begin{rem}[label={rem:1}]{$f$ inherits properties of $(f_k)$}{}
  Since $f$ is pasted together from the $f_k$'s the way we can see it
  in \cref{eq:31}, $f$ inherits some common properties the $f_k$ may
  have.  For example
  \begin{remenum}
  \item \label{item:5} If $f_k\ge 0$ for every $k$ then $f\ge 0$.
  \item \label{item:6} If $|f_k|\le c$ for a constant $c$ for every
    $k$ then $|f|\le c$.
  \item \label{item:7} If each $f_k$ is $0-1$ valued then so is $f$.
  \item \label{item:8} If each $f_k$ is a weight, then the
    construction can be adjusted so that $f$ also becomes a weight.
  \end{remenum}
\end{rem}
Only \cref{item:8} requires some explanation since we
need to have $\sum_{n\in\setN}f(n)=\infty$.  For this, we observe a
flexibility in the choice of the sequence $N_1<N_2<\dots$ in the
upcoming proof of \cref{lem:1}.  Namely the sequence $(N_k)$ is
defined recursively, and once $N_1<N_2<\dots<N_{k-1}$ are chosen, the
index $N_k$, $N_k>N_{k-1}$, is chosen ``large enough'' to satisfy some
criteria.  So it can always be chosen to be ``even larger'' to satisfy
additional criteria.  For our case the single additional criterion is
to ensure $\sum_{n\in (N_{k-1},N_k]}f_{k-1}(n)>1$, which is possible
since $f_{k-1}$ is assumed to be a weight, so
$\sum_{n\in (N_{k-1},\infty)}f_{k-1}(n)=\infty$.
\begin{proof}[Proof of \cref{lem:1}]
  For the recursive definition of the $(N_k)$, define first the
  sequence $(\epsilon_k)$ by
  \begin{equation}
    \label{eq:32}
    \epsilon_k\cq 2\sup_{l\ge k}\none{f_l-f_k}
  \end{equation}
  We can assume, without loss of generality, that $\epsilon_k>0$ for
  every $k$, since $\epsilon_k=0$ for some $k$ would imply
  $\none{f_l-f_k}=0$ for $l\ge k$ hence we could take $f=f_k$.

  In the first step of the recursion, let $N_1=1$.

  In the second step, let $N_2>N_1$ to be large enough to satisfy
  \begin{align}
    \label{eq:33}
    \frac{N_1}{N_2}&<\epsilon_1\\
    \setA_{[1,N]}\abs*{f_1-f_2}
                   &<\epsilon_1 \fe N\ge N_2\\
    \intertext{and}
    \setA_{[1,N]}\abs*{f_1-f_3}
                   &<\epsilon_1 \fe N\ge N_2
  \end{align}
  Complete the second step of the recursion by defining $f$ to be
  equal $f_1$ on the interval $(N_1,N_2]$.  Let $k> 2$ and assume that
  we have defined $N_1<N_2<\dots<N_{k-1}$ and $f$ to be equal $f_j$ on
  the interval $(N_j,N_{j+1}]$ for $j\in[1,k-2]$. For step $k$ of the
  recursion let $N_{k}>N_{k-1}$ be large enough to satisfy
  \begin{align}
    \label{eq:34}
    \frac1{N_k}\sum_{
    [1,N_{k-1}]}\abs*{f_j-f}
    &<\epsilon_j, \fe j\in [1,k-2]\\
    \setA_{[1,N]}\abs*{f_j-f_{k-1}}
    &<\epsilon_j \fe N\ge N_k,  j\in [1,k-2]\label{eq:35}\\
    \intertext{and}
    \setA_{[1,N]}\abs*{f_j-f_k}
    &<\epsilon_j \fe N\ge N_k,  j\in [1,k-2]\label{eq:36}
  \end{align}
  Complete the $k$th step of the recursion by defining $f$ to be equal
  $f_{k-1}$ on the interval $(N_{k-1},N_k]$.

  Let us fix $j$ and let $N$ be large enough so that for some
  $k\ge j+2$ we have
  \begin{equation}
    \label{eq:37}
    N_k\le N<N_{k+1}
  \end{equation}
  We want to show that
  \begin{equation}
    \label{eq:38}
    \setA_{[1,N]}\abs*{f_j-f}<3\epsilon_j
  \end{equation}
  Let us estimate $\setA_{[1,N]}\abs*{f_j-f}$ as,
  \begin{align}
    \label{eq:39}
    \setA_{[1,N]}\abs*{f_j-f}
    &=\frac1N\sum_{[1,N_{k-1}]}\abs*{f_j-f}\\
    &+\frac1N\sum_{(N_{k-1},N_k]}\abs*{f_j-f}\label{eq:40}\\
    &+\frac1N\sum_{(N_{k},N]}\abs*{f_j-f}\label{eq:41}                                                     
  \end{align}
  We can estimate the term in \cref{eq:39}, using \cref{eq:34} and
  that $N\ge N_k$, as
  \begin{equation}
    \label{eq:42}
    \frac1N\sum_{[1,N_{k-1}]}\abs*{f_j-f}<\epsilon_j
  \end{equation}
  For the term in \cref{eq:40} we have
  \begin{equation}
    \label{eq:43}
    \frac1N\sum_{(N_{k-1},N_k]}\abs*{f_j-f}<\epsilon_j
  \end{equation}
  This follows from \cref{eq:35} since $f=f_{k-1}$ on the interval
  $(N_{k-1},N_k]$.

  For the term in \cref{eq:41} we have
  \begin{equation}
    \label{eq:44}
    \frac1N\sum_{(N_k,N]}\abs*{f_j-f}<\epsilon_j
  \end{equation}
  This follows from \cref{eq:36} since $f=f_{k}$ on the interval
  $(N_k,N]$.
  
  Putting the estimates in \cref{eq:42,eq:43,eq:44} together we obtain
  \cref{eq:38}.
\end{proof}
The second lemma shows that the family $\mathcal{M}$ of sequences $f$
for which $\M(f)=\lim_N\setA_{[1,N]}f$ exists is closed with respect
to the upper mean $\nmean{ }$ defined by
\begin{equation}
  \label{eq:45}
  \nmean{f}\cq \limsup_N\abs*{\setA_{n\in[1,N]}f(n)},
  \quad f\in \setC^{\setN}
\end{equation}

\begin{lem}[label={lem:2}]{$\mathcal{M}$ is closed with respect to
    $\nmean{}$}{}
  Let $(f_j)$ be a sequence from $\mathcal{M}$.  Suppose that $(f_j)$
  converges to $f\in \setC^{\setN}$ in the seminorm $\nmean{}$, so
  \begin{equation}
    \label{eq:46}
    \lim_j\nmean{f_j-f}=0
  \end{equation}
  Then $f\in \mathcal{M}$ and
  \begin{equation}
    \label{eq:47}
    \M(f)=\lim_j \M(f_j)
  \end{equation}
\end{lem}
\begin{proof}
  First note that, as a consequence of \cref{eq:46}, the sequence
  $(f_j)$ is a Cauchy sequence, meaning that for a given $\epsilon>0$
  there is $J$ so that
  \begin{equation}
    \label{eq:48}
    \nmean{f_j-f_J}<\epsilon\fe j\ge J
  \end{equation}
  Since $\abs*{\M(f_j)-\M(f_J)}=\abs*{\M(f_j-f_J)}=\nmean{f_j-f_J}$ we
  see,
  \begin{equation}
    \label{eq:49}
    \abs*{\M(f_j)-\M(f_J)}<\epsilon\fe j\ge J
  \end{equation}
  so the sequence $\M(f_j)$ of means is a Cauchy sequence of numbers.
  Denote $L\coloneqq \lim_j\M(f_j)$.  We want to show that $\M(f)=L$.
  For a given $\epsilon>0$, choose a $j$ so that
  $\abs*{\M(f_j)-L}<\epsilon$ and $\nmean{f-f_j}<\epsilon$.  We then
  have, for an arbitrary $N$,
  \begin{equation}
    \label{eq:50}
    \abs*{\setA_{[1,N]}f-L}\le \abs*{\setA_{[1,N]}(f-f_j)}
    +\abs*{\setA_{[1,N]}f_j-L}
  \end{equation}
  Taking $\limsup_N$ of both sides, we get
  \begin{equation}
    \label{eq:51}
    \limsup_N\abs*{\setA_{[1,N]}f-L}\le \nmean{f-f_j}+\abs*{\M(f_j)-L}
  \end{equation}
  Since $\nmean{f-f_j}<\epsilon$ and $\abs*{\M(f_j)-L}<\epsilon$, we
  get $\limsup_N\abs*{\setA_{[1,N]}f-L}<2\epsilon$.  Since
  $\epsilon>0$ was arbitrary, we have
  $\lim_N\abs*{\setA_{[1,N]}f-L}=0$ which means
  $\M(f)=L=\lim_j\M(f_j)$.

\end{proof}
How do we now show that every open set can be represented?  Let
$B\subset \setT$ be open with positive $\mu_\alpha$ measure, let
$B=\cup_jI_j$ be its decomposition into pairwise disjoint open
intervals $I_j$ and set $B_k\cq \cup_{j\in[1,k]}I_j$.  Since
$\mu_\alpha(B)>0$, we have $\mu_\alpha(B_k)>0$ for large enough $k$.
For simplicity, we assume that $\mu_\alpha(B_k)>0$ for every $k$. The
sets $B_k$ increase to $B$ monotonically, hence, in particular, we
have $\lim_k\mu_\alpha\pa*{B_k\triangle B}=0$. According to
\cref{prop:1}, the set $B_k$ can be represented by the set
$S_k\subset R$ defined by
\begin{equation}
  \label{eq:52}
  S_k\cq \set*{r_n}{r_n\alpha \in B_k}
\end{equation}
and we have $\M_R(S_k)=\mu_\alpha(B_k)$. Since for every $k,l$ the set
$B_k\triangle B_l$ is Jordan measurable, we also have
\begin{equation}
  \label{eq:53}
  \M_R\pa*{S_k\triangle
    S_l}=\mu_\alpha\pa*{B_k\triangle B_l}
\end{equation}
For each $k$ let us define the sequence $f_k$ by
\begin{equation}
  \label{eq:54}
  f_k(n)\cq\setone_{S_k}(r_n),\qquad n\in\setN
\end{equation}
We have
\begin{equation}
  \label{eq:55}
  \M(f_k)=\M_R(S_k)\fe k\in\setN
\end{equation}
and we can rewrite \cref{eq:53} as
\begin{equation}
  \label{eq:56}
  \M\abs*{f_k-f_l}=\mu_\alpha\pa*{B_k\triangle B_l}
\end{equation}
Since $(B_k)$ is a Cauchy sequence, so
$\lim_k\sup_{l\ge k}\mu_\alpha\pa*{B_k\triangle B_l}=0$, \cref{eq:56}
implies that $(f_k)$ is also a Cauchy sequence in $\none{}$, so we
have $\lim_k\sup_{l\ge k}\M\abs*{f_k-f_l}=0$. Since
$\M\abs*{f_k-f_l}=\none{f_k-f_l}$, according to \cref{lem:1}, there is
$f$ to which the $(f_k)$ converges, that is, so
$\lim_k\none{f_k-f}=0$, and by \cref{lem:2}, $\M(f)=\lim_k \M(f_k)$.
Since $\M(f_k)=\mu_{\alpha}(B_k)$ and
$\lim_k\mu_{\alpha}(B_k)=\mu_{\alpha}(B)$, we have
$\M(f)=\mu_{\alpha}(B)>0$. According to \cref{item:7}
$f$ is $0-1$ valued hence we can define a set $S\subset R$ by its
indicator as
\begin{equation}
  \label{eq:57}
  \setone_S(r_n)\cq f(n),\qquad n\in\setN
\end{equation}
We have
\begin{equation}
  \label{eq:58}
  \M_R(S)=\M(f)
\end{equation}
We want to show that $S$ is good and it represents $B$ at $\alpha$.
To this end, let $\beta\in\setT$ be arbitrary and define the sequences
$f^\beta_k$ and $f^\beta$ by
\begin{align}
  \label{eq:59}
  f^\beta_k(n)&\cq f_k(n)\e(r_n\beta) \text{ for }n\in\setN\\
  f^\beta(n)&\cq f(n)\e(r_n\beta)\text{ for }n\in\setN
\end{align}
Since $\M(f)=\lim_k \M(f_k)>0$ and $\M(f_k)=\M_R(S_k)$, we have
$\M(S)>0$.  It follows that, by \cref{eq:25}, to show that $S$ is
good, it's enough to show that $\M(f^\beta)$ exists for every $\beta$
and to show that $S$ represents $B$ at $\alpha$ it's enough to show
that $\M(f^{p\alpha})=\mu_\alpha\pa*{\e^p\setone_B}$ for every
$p\in\setZ$.

Let us first show that $\M(f^\beta)$ exists for every $\beta$.  Since
each set $S_k$ is good with $\M(S_k)>0$, we have, as a consequence of
\cref{eq:25}, that $f^\beta_k\in\mathcal{M}$ for every $k,\beta$.  The
fact that for every $\beta$, the sequence $(f^\beta_k)$ converges to
$f^\beta$ in the norm $\nmean{}$ follows from the uniform estimate
\begin{equation}
  \label{eq:60}
  \nmean{f^\beta_k-f^\beta}\le \none{f_k-f} \fe \beta
\end{equation}

By \cref{lem:2}, $f^\beta\in\mathcal{M}$ and
\begin{equation}
  \label{eq:61}
  \M(f^\beta)=\lim_k\M(f^\beta_k)
\end{equation}
Let us now show that $S$ represents $B$ at $\alpha$, that is,
$\M(f^{p\alpha})=\mu_\alpha\pa*{\e^p\setone_B}$ for every
$p\in\setZ$. Since the sequence $\pa*{B_k}$ converges to $B$ in
$L^1(\mu_\alpha)$-norm we have
\begin{equation}
  \label{eq:62}
  \lim_k\mu_\alpha\pa*{\e^p\setone_{B_k}}=\mu_\alpha\pa*{\e^p\setone_{B}}
  \fe p\in\setZ
\end{equation}
Since $\M(f^{p\alpha}_k)=\mu_\alpha\pa*{\e^p\setone_{B_k}}$ and, by
\cref{eq:61}, $\lim_k\M(f^{p\alpha}_k)=\M(f^{p\alpha})$, \cref{eq:62}
implies that $\M(f^{p\alpha})=\mu_\alpha\pa*{\e^p\setone_{B}}$.

We record the general idea we used as \cref{item:10} below.
\begin{prop}[label={prop:2}]{Limit of good sets with positive mean is
    good}{}\rr
  Let $(S_k)$ be a sequence of good subsets of $R$ with mean which
  converge to $S\subset R$ in $\noner{}$-seminorm, that is,
  $\lim_k\noner{S_k\triangle S}=0$.  Assume that $\limsup_k\M_R(S_k)>0$.

  Then we have the following.
  \begin{propenum}
  \item \label{item:9} $\lim_k\M_R(S_k)$ exists and
    $\M_R(S)=\lim_k\M_R(S_k)>0$.
  \item \label{item:10} $S$ is a good set.
  \item \label{item:11} The sequence $\pa*{\mu_{S_k,\beta}}_k$ of limit
    measures converge to $\mu_{S,\beta}$ in variation distance and
    uniformly in $\beta$,
    \begin{equation}
      \label{eq:63}
      \lim_k\sup_\beta\nvar{\mu_{S_k,\beta}-\mu_{S,\beta}} =0
    \end{equation}
  \item \label{item:12} Let $\nu$ be a Borel measure on $\setT$.

    If for some $\alpha$, $\mu_{S_k,\alpha}$ is absolutely continuous
    with respect to $\nu$ with Radon-Nikodym derivative $\rho_{k}$ for
    every $k$, then $\mu_{S,\alpha}$ is also absolutely continuous
    with respect to $\nu$ with Radon-Nikodym derivative $\rho$ which
    satisfies
    \begin{equation}
      \label{eq:64}
      \lim_k\norm*{\rho_{k}-\rho}_{L^1\pa*{\nu}}=0
    \end{equation}
  \end{propenum}
\end{prop}
 
 \begin{proof}
   The proof of \cref{item:9} follows from the triangle inequality for
   the $\none{}$-seminorm, since we then have
   \begin{align*}
     \abs*{\M_R(S_k)-\M_R(S)}&=\abs*{\noner{S_k}-\noner{S}}\\
                         &\le \noner{S_k\triangle S}
   \end{align*}
   and just use the assumption that $\lim_k \noner{S_k\triangle S}=0$.

   The argument we gave just before the enunciation of our proposition
   proves that $S$ is a good set.
   
   For the proof of \cref{item:11} note that in the argument preceding
   our proposition we proved that the sequence
   $\pa*{\mu_{S_k,\beta}}_k$ of measures converges weakly to
   $\mu_{S,\beta}$ for every $\beta$ but an estimate similar to
   \cref{eq:60} enables us to draw the stronger conclusion of
   \cref{eq:63}.

   The following lemma gives us the estimates we need.
   \begin{lem}[label={lem:3}]{$\noner{}$ dominates $\nvar{}$ and
       $\norm{}_{L^1}$}{}
     Let $v_1,v_2$ be good $R$-weights.  Assume that
     \begin{equation}
       \label{eq:65}
       \max \cbrace*{\noner{v_1},\noner{v_2}}>0
     \end{equation}
     Then we have the following.
     \begin{lemenum}
     \item \label{item:13}
       \begin{equation}
         \label{eq:66}
         \sup_\beta\nvar{\mu_{v_1,\beta}-\mu_{v_2,\beta}}
         \le\frac{2}{\max \cbrace*{\noner{v_1},\noner{v_2}}}\noner{v_1-v_2}        
       \end{equation}
     \item \label{item:14}If, for some $\alpha$, the limit measures
       $\mu_{v_1,\alpha}$ and $\mu_{v_2,\alpha}$ are absolutely
       continuous with respect to a Borel measure $\nu$ on $\setT$
       with Radon-Nikodym derivatives $\rho_{1}$ and $\rho_2$,
       respectively, then
       \begin{equation}
         \label{eq:67}
         \norm*{\rho_{1}-\rho_{2}}_{L^1\pa*{\nu}}
         \le \frac{4}{\max \cbrace*{\noner{v_1},\noner{v_2}}}\noner{v_1-v_2}
       \end{equation} 
     \end{lemenum}
   \end{lem}
   \begin{proof}
     To prove \cref{item:13}, that is, the inequality in \cref{eq:66},
     fix $\beta$ and $\phi\in \mathcal{C}_+$, so $\phi$ is a
     continuous $\setT\to\setC$ function with $0\le \phi\le 1$. We can
     assume without loss of generality that
     $\max \cbrace*{\noner{v_1},\noner{v_2}}=\noner{v_1}$. Let $(N_l)_l$
     be a strictly increasing sequence of indices so that
     \begin{equation}
       \label{eq:68}
       \lim_l \setA_{[1,N_l]}v_1=\noner{v_1}
     \end{equation}
     Let us estimate as
     \begin{align*}
       &\abs*{\setA^{v_1}_{n\in[1,N_l]}\phi(r_n\beta)-\setA^{v_2}_{n\in[1,N_l]}\phi(r_n\beta)}\\
       &\qquad=\abs*{\frac{1}{\setA_{[1,N_l]}v_1}\setA_{n\in[1,N_l]}v_1(r_n)\phi(r_n\beta)
         -\frac{1}{\setA_{[1,N_l]}v_2}\setA_{n\in[1,N_l]}v_2(r_n)\phi(r_n\beta)}\\
       \intertext{ adding
       $0=-\frac{1}{\setA_{[1,N_l]}v_1}\setA_{n\in[1,N_l]}v_2(r_n)\phi(r_n\beta)
       +\frac{1}{\setA_{[1,N_l]}v_1}\setA_{n\in[1,N_l]}v_2(r_n)\phi(r_n\beta)$
       inside the absolute value and using the triangle inequality,}
       &\qquad\le \frac{1}{\setA_{[1,N_l]}v_1}
         \abs*{\setA_{n\in[1,N_l]}v_1(r_n)\phi(r_n\beta)
         -\setA_{n\in[1,N_l]}v_2(r_n)\phi(r_n\beta)}\\
       &\qquad\quad+\abs*{\frac{1}{\setA_{[1,N_l]}v_1}-\frac{1}{\setA_{[1,N_l]}v_2}}
         \abs*{\setA_{n\in[1,N_l]}v_2(r_n)\phi(r_n\beta)}\\
       &\qquad\le \frac{1}{\setA_{[1,N_l]}v_1}\setA_{[1,N_l]}\abs*{v_1-v_2}
         +\frac{\setA_{[1,N_l]}\abs*{v_1-v_2}}
         {\setA_{[1,N_l]}v_1\setA_{[1,N_l]}v_2}\setA_{[1,N_l]}v_2\\
       &\qquad=\frac{2}{\setA_{[1,N_l]}v_1}\setA_{[1,N_l]}\abs*{v_1-v_2}
     \end{align*}
     so we have
     \begin{equation}
       \label{eq:69}
       \abs*{\setA^{v_1}_{n\in[1,N_l]}\phi(r_n\beta)-\setA^{v_2}_{n\in[1,N_l]}\phi(r_n\beta)}
       \le \frac{2}{\setA_{[1,N_l]}v_1}\setA_{[1,N_l]}\abs*{v_1-v_2}
     \end{equation}
     Since
     $\lim_l\setA^{v_i}_{n\in[1,N_l]}\phi(r_n\beta)=\mu_{v_i,\beta}(\phi)$,
     $\lim_l\setA_{[1,N_l]}v_1=\noner{v_1}$ and
     $\limsup_l\frac{2}{\setA_{[1,N_l]}v_1}\setA_{[1,N_l]}\abs*{v_1-v_2}\le
     \frac2{\noner{v_1}}\noner{v_1-v_2}$, we get
     \begin{equation}
       \label{eq:70}
       \abs*{\mu_{v_1,\beta}(\phi)-\mu_{v_2,\beta}(\phi)}\le \frac2{\noner{v_1}}\noner{v_1-v_2}
     \end{equation}
     which is independent of $\beta$ and $\phi\in\mathcal{C}_+$,
     proving \cref{eq:66}.

     To prove \cref{item:14}, observe first that,
     since $\mu_{v_i,\alpha}=\rho_i\nu$ and $\rho_i$ are probability
     densities with respect to $\nu$, we have
     $\nvar{\rho_1\nu-\rho_2\nu}=\frac12\norm*{\rho_{1}-\rho_{2}}_{L^1\pa*{\nu}}$.
     It follows that
     \begin{equation}
       \label{eq:71}
       \nvar{\mu_{v_1,\alpha}-\mu_{v_2,\alpha}}
       =\frac12\norm*{\rho_{1}-\rho_{2}}_{L^1\pa*{\nu}}
     \end{equation}
     and now just  use \cref{eq:66}.
   \end{proof}

   Now, let us come back to the proof of \cref{prop:2}. Using
   \cref{eq:66} with $v_1=\setone_{S_k}$ and $v_2=\setone_S$, we get
   \begin{equation}
     \label{eq:72}
     \sup_\beta\nvar{\mu_{S_k,\beta}-\mu_{S,\beta}}
     \le\frac{2}{\max \cbrace*{\noner{S_k},\noner{S}}}\noner{S_k\triangle S} 
   \end{equation}
   Using the assumption that $\lim_k\noner{S_k\triangle S}=0$ and that,
   by \cref{item:9}, we have
   $\lim_k\noner{S_k}=\lim_k\M_R(S_k)=\M_R(S)=\noner{S}>0$, we get
   \cref{eq:63}.

   For the proof of \cref{item:12}, by \cref{item:9}, we can assume,
   without loss of generality that $\M_R(S_k)>0$ for every $k$. Using
   \cref{eq:67} with $v_1=\setone_{S_k}$ and $v_2=\setone_{S_l}$ we
   get
   \begin{equation}
     \label{eq:73}
     \norm*{\rho_{l}-\rho_{k}}_{L^1\pa*{\nu}}
     \le \frac{4}{\max
       \cbrace*{\noner{S_k},\noner{S_l}}}\noner{S_k\triangle S_l}
   \end{equation}
   This implies, since the sequence $(S_k)$ is convergent in
   $\noner{}$-seminorm and hence is Cauchy, that the sequence
   $(\rho_k)$ is Cauchy in $L^1(\nu)$-norm.  Since $L^1(\nu)$ is
   complete and $\nu(\rho_k)=1$ for every $k$, there is a
   $\rho\in L^1(\nu)$ with $\nu(\rho)=1$ so that
   \begin{equation}
     \label{eq:74}
     \lim_k\norm*{\rho_{k}-\rho}_{L^1\pa*{\nu}}=0
   \end{equation}
   
   Since
   $\norm*{\rho_{k}-\rho}_{L^1\pa*{\nu}}=2\nvar{\rho_{k}\nu-\rho\nu}$
   and $\rho_{k}\nu=\mu_{S_k,\alpha}$, we get
   \begin{equation}
     \label{eq:75}
     \lim_k\nvar{\mu_{S_k,\alpha}-\rho\nu}=0
   \end{equation}
   But by \cref{item:11} we also have
   $\lim_k\nvar{\mu_{S_k,\alpha}-\mu_{S,\alpha}}=0$ hence we must have
   $\mu_{S,\alpha}=\rho\nu$.
   
 \end{proof}

 We can use \cref{prop:2} in an argument similar to the one we used to
 show that any open set can be represented at $\alpha$ to prove that
 if a $G_\delta$ set $B$ has positive $\mu_\alpha$-measure then it can
 be represented at $\alpha$. Only the initial setup of the proof is
 different.  This time let $(B_k)$ be a \emph{decreasing} sequence of
 open sets which converges to $B$.  Let $S_k\subset R$ represent $B_k$
 at $\alpha$. We again have the isometry \cref{eq:53} from which
 everything follows: the existence of a good set $S$ which represents
 $B$ at $\alpha$ and $\M(S)=\mu_\alpha(B)$.

 Since every Borel measurable set differs from a $G_\delta$ set on a
 set of $\mu_\alpha$-measure zero, we in fact showed that every Borel
 set of positive $\mu_\alpha$-measure can be represented.  So we
 proved the following more precise version of
 \cref{thm:general_representability} for the case when the
 Radon-Nikodym derivative of a measure with respect to $\mu_\alpha$ is
 an indicator.
 \begin{prop}[label={prop:3}]{\Cref{thm:general_representability} for
     indicators}{}\rr
   Let $R\subset \setN$ be a good set, $\alpha$ be an irrational
   number, and let $B$ be a Borel set with $\mu_\alpha(B)>0$.

   Then $B$ can be represented at $\alpha$ by a set $S\subset R$ which
   satisfies
   \begin{equation}
     \label{eq:76}
     \M_R(S)=\mu_\alpha(B)>0
   \end{equation}
 \end{prop}

 \section{Measures that cannot be represented at every irrational
   $\alpha$}
 \label{sec:measures-that-cannot}

 For this section, we suspend the proof of
 \cref{thm:general_representability} just to see how \cref{prop:3} can
 be used to prove \cref{thm:cont_only}.  We will also prove
 \cref{cantor_nonrep}.

 \subsection{Proof of \cref{thm:cont_only}}
 \label{sec:proof-crefthm:c}
 In this section we want to prove that if the Borel probability
 measure $\nu$ has a point-mass at a point $\gamma\in\setT$ and
 $\alpha$ is irrational then $\nu$ cannot be represented at $\alpha$.

 The proof is by contradiction: let us assume that for some
 $\gamma\in\setT$, $\nu(\{\gamma\})>0$ and that $\nu$ can be
 represented by the set $R$ at $\alpha$, so $\mu_{R,\alpha}=\nu$. Then
 the Dirac mass $\delta_{\gamma}$ is absolutely continuous with
 respect to $\mu_{R,\alpha}$ with Radon-Nikodym derivative equal
 $\frac1{\nu(\gamma)}\setone_{\{\gamma\}}$.  By \cref{prop:3} there is
 a good set $S\subset R$ which represents $\delta_{\gamma}$ at
 $\alpha$, so $\mu_{S,\alpha}=\delta_{\gamma}$. Let us define the
 function $\phi:\setT\to\setC$ as
 \begin{equation}
   \label{eq:77}
   \phi(\beta)\cq \mu_{S,\beta}(\e)
 \end{equation}
 Then, by the definition of $\mu_{S,\beta}(\e)$, $\phi$ is the limit
 of the sequence $(\phi_N)$ of continuous functions defined by
 $\phi_N(\beta)\cq \setA_{n\in [1,N]}\e(s_n\beta)$ where $(s_n)$ is
 the elements of $S$ arranged in increasing order.  Since for every
 $p\in\setZ$ we have $\mu_{S,p\alpha}(\e)=\mu_{S,\alpha}(\e^p)$ and
 $\mu_{S,\alpha}(\e^p)=\e^p(\gamma)$, we have
 \begin{equation}
   \label{eq:78}
   \abs*{\phi}=1 \text{ on the dense set }\set*{p\alpha}{p\in\setZ}
 \end{equation}
 By Weyl's theorem\autocites[Satz 21]{MR1511862}[Theorem
 4.1]{MR0419394}, $\phi=0$ on a set of full Lebesgue measure, so, as a
 consequence,
 \begin{equation}
   \label{eq:79}
   \phi=0 \text{ on a dense set}.
 \end{equation}
 By Baire's theorem\autocite[Page 83]{zbMATH01030714}, \cref{eq:78,eq:79}
 together are impossible to hold simultaneously for the limit of
 continuous functions.

 \subsection{Proof of \cref{cantor_nonrep}}
 \label{sec:proof--1}
 So in this section we want to prove that if $\nu$ is a Borel
 probability measure on $\setT$ with
 $\limsup_{p\to\infty}\abs*{\nu(\e^p)}>0$ then there is an irrational
 $\alpha$ where $\nu$ cannot be represented. In fact the set of such
 $\alpha$'s is of full Lebesgue measure.

 From the assumption that $\limsup_{p\to\infty}\abs*{\nu(\e^p)}>0$ it
 follows that there is an $\epsilon>0$ and a infinite sequence
 $p_1<p_2<\dots$ of indices so that
 \begin{equation}
   \label{eq:80}
   |\nu\e^{p_k}|>\epsilon\text{ for }k\in\setN
 \end{equation}
 By Weyl's result\autocites[Satz 21]{MR1511862}[Theorem
 4.1]{MR0419394}, the set $A\subset \setT$ defined by
 \begin{equation}
   \label{eq:81}
   A\coloneqq
   \set*{\alpha}{\overbar{\set{p_k\alpha}{k\in\setN}}\text{ has
       nonempty interior}\pmod 1}
 \end{equation}
 has full $\lambda$ measure.  We want to show that $A$ is a subset of
 those $\alpha$'s at which the measure $\nu$ cannot be represented.

 Let $\alpha\in A$, and suppose the measure $\nu$ can be represented
 at $\alpha$, say, by the set $S=(s_n)$, that is,
 $\mu_{S,\alpha}=\nu$.  Let us define the function
 $\phi:\setT\to\setC$ as
 \begin{equation}
   \label{eq:82}
   \phi(\beta)\cq \mu_{S,\beta}(\e)
 \end{equation}
 Then, by the definition of $\mu_{S,\beta}(\e)$, $\phi$ is the limit
 of the sequence $(\phi_N)$ of continuous functions defined by
 $\phi_N(\beta)\cq \setA_{n\in[1,N]}\e(s_n\beta)$.  Since for every
 $p\in\setZ$ we have $\mu_{S,p\alpha}(\e)=\mu_{S,\alpha}(\e^p)$ and
 $\mu_{S,\alpha}(\e^p)=\nu\pa*{\e^p}$, by \cref{eq:80} we have
 \begin{equation}
   \label{eq:83}
   \abs*{\mu_{S,p_k\alpha}(\e)}>\epsilon \fe k\in\setN
 \end{equation}
 By the definition of $\phi$, we can write the above as
 \begin{equation}
   \label{eq:84}
   \abs*{\phi}>\epsilon \text{ on the set }\set*{p_k\alpha}{k\in\setN}
 \end{equation}
 Since $\alpha\in A$, the set $\set*{p_k\alpha}{k\in\setN}$ is dense
 in a nondegenerate interval $I\subset \setT$.
 
 By Weyl's theorem\autocites[Satz 21]{MR1511862}[Theorem
 4.1]{MR0419394}, $\phi=0$ on a set $U$ of full Lebesgue measure
 \begin{equation}
   \label{eq:85}
   \phi=0 \text{ on } U
 \end{equation}
 Since both $\set*{p_k\alpha}{k\in\setN}$ and $U$ are dense in the
 interval $I$, by Baire's theorem\autocite[Page 83]{zbMATH01030714},
 \cref{eq:84,eq:85} cannot be true together for the limit $\phi$ of
 continuous functions.

 \section{Representing by weights}
 \label{sec:representing-weights}

 In this section, we fix the good set\footnote{Note that we make no
   further assumption on $R$, such as sublacunarity} $R$ and the
 irrational number $\alpha$, and we continue in the tradition of
 \cref{sec:proof-} suppressing the set $R$ in our notation for the
 limit measure, so $\mu_\alpha=\mu_{R,\alpha}$.

 In trying to extend the class of representable functions $\rho$ from
 indicators, we first consider an easier problem.  Instead of
 representing by sets, we represent by $R$-weights.
 \begin{defn}[label={defn:6}]{Function represented by a weight}{}\rr
   Let $\rho$ be an unsigned $L^1\pa*{\setT,\mu_\alpha}$ function with
   $\mu_\alpha(\rho)>0$.

   We say the $R$-weight $w$ \emph{represents $\rho$ at $\alpha$} if
   $w$ is good and it represents the measure $\rho\cdot \mu_{\alpha}$,
   that is,
   $\mu_{w,\alpha}=\frac1{\mu_{\alpha}(\rho)}\rho\cdot \mu_{\alpha}$.

\end{defn}
The $R$-weights $w$ we consider in this section have positive mean in
$R$, so $\M_R(w)>0$.  For such a weight, the non-normalized averages
$\setA_{n\in[1,N]}w(r_n)\delta_{r_n\beta}$ are easier to handle than
the normalized ones $\setA_{n\in[1,N]}^w\delta_{r_n\beta}$.  The
convergence or divergence properties of the two averages are identical
since they differ only by the nonzero factor $\M_R(w)$,
\begin{equation}
  \label{eq:86}
  \lim_N\setA_{n\in[1,N]}w(r_n)\delta_{r_n\beta}=\M_R(w)\lim_N\setA_{n\in[1,N]}^w\delta_{r_n\beta}
\end{equation}
as can be seen from writing
$\setA_{n\in[1,N]}^w\delta_{r_n\beta}=\frac{N}{\sum_{n\in[1,N]}w(r_n)}\setA_{n\in
  [1,N]}w(r_n)\delta_{r\beta}$.

In \cref{sec:basic-example-repr} we have already seen that if $\rho$
is an unsigned continuous function with $\mu_\alpha(\rho)>0$ then the
$R$-weight $w$ defined by
\begin{equation}
  \label{eq:87}
  w(r_n)\cq \rho(r_n\alpha)
\end{equation}
is good, unsigned and it represents $\rho$ at $\alpha$. Since every
unsigned $\mu_\alpha$-integrable function can be approximated
arbitrary closely by unsigned continuous functions in
$L^1\pa*{\setT,\mu_\alpha}$-norm, the proof of \cref{item:3} requires
only an approximation argument similar to what we had in
\cref{sec:proof-}.  We restate \cref{item:3} in the following form for
the readers convenience.

\begin{prop}[label={prop:4}]{Any integrable function is
    representable with weights}{}\rr 
  Let $\rho$ be an unsigned function from $L^1\pa*{\setT,\mu_\alpha}$
  with $\mu_\alpha(\rho)>0$.

  Then there is an $R$-weight $w$ which represents $\rho$ at $\alpha$.
  In particular, we have
  \begin{equation}
    \label{eq:88}
    \M_R(w)=\mu_\alpha(\rho)
  \end{equation}
  Furthermore, if $\rho$ is a bounded function then the representing
  $R$-weight $w$ can be chosen to be bounded.
\end{prop}
The proof of \cref{prop:2} can be easily adjusted to obtain the
following analog for weights.
\begin{prop}[label={prop:5}]{Limit of good weights with positive mean
    is good}{}\rr
  Let $(w_k)$ be a sequence of good $R$-weights with mean which
  converge to the $R$-weight $w$ in $\noner{}$-seminorm, so
  $\lim_N\noner{w_k- w}=0$.  Assume that $\limsup_k\M_R(w_k)>0$.

  Then we have the following.
  \begin{propenum}
  \item \label{item:15} $\lim_k\M_R(w_k)$ exists and
    $\lim_k\M_R(w_k)=\M_R(w)>0$.
  \item \label{item:16} $w$ is a good $R$-weight.
  \item \label{item:17} The sequence $\pa*{\mu_{w_k,\beta}}_k$ of
    limit measures converge to $\mu_{w,\beta}$ in variation distance
    and uniformly in $\beta$,
    \begin{equation}
      \label{eq:89}
      \lim_k\sup_\beta\nvar{\mu_{w_k,\beta}-\mu_{w,\beta}} =0
    \end{equation}
  \item \label{item:18} Let $\nu$ be a Borel measure on $\setT$.

    If for some $\alpha$, $\mu_{w_k,\alpha}$ is absolutely continuous
    with respect to $\nu$ with Radon-Nikodym derivative $\rho_{k}$ for
    every $k$ then $\mu_{w,\alpha}$ is also absolutely continuous with
    respect to $\nu$ with Radon-Nikodym derivative $\rho$ which
    satisfies
    \begin{equation}
      \label{eq:90}
      \lim_k\norm*{\rho_{k}-\rho}_{L^1\pa*{\nu}}=0
    \end{equation}
  \end{propenum}
\end{prop}
With this proposition, we can complete the proof of \cref{prop:4}
exactly as we proved \cref{prop:3}, using a sequence $(\rho_k)$ of
unsigned continuous functions that converge to $\rho$ in
$L^1\pa*{\mu_\alpha}$-norm.  We need to remark only that if $\rho$ is
a bounded function, then the sequence $(\rho_k)$ of continuous
functions can be chosen to be uniformly bounded.

\section{Proof of \cref{thm:general_representability} for bounded
  $\rho$}
\label{sec:proof-bounded-rho}

In this section, we still are working with a fixed good set $R$ of
positive integers, an irrational number $\alpha$, but now we also fix
a bounded Borel measurable, unsigned function $\rho$ with
$\mu_\alpha(\rho)>0$.  We proved in \cref{sec:representing-weights}
that $\rho$ can be represented at $\alpha$ by a good, bounded
$R$-weight $w$. In this section we will show that there is a good set
$S\subset R$ which also represents $\rho$ at $\alpha$, hence proving
\cref{thm:general_representability} for bounded $\rho$. It follows
from the definition of representation that if the good $R$-weight $w$
represents $\rho$ then so does the $R$-weight $cw$ for every positive
constant $c$.  In particular, we can assume that the $R$-weight $w$
representing $\rho$ is bounded by $1$. We will show that then there is
a set $S\subset R$ so that
\begin{equation}
  \label{eq:91}
  \lim_N\sup_\beta\abs*{\setA_{n\in[1,N]}\setone_S(r_n)\e(r_n\beta)
    -\setA_{n\in[1,N]}w(r_n)\e(r_n\beta)}=0
\end{equation}
The ``construction'' of $S$ satisfying \cref{eq:91} is done randomly.
Our random method requires that we limit the growth of the set $R$; we
need to assume that $R$ is \emph{sublacunary}\autocite[Theorem
B]{MR1721622}.  We need the concept of a sublacunary weight.
\begin{defn}[label={defn:7}]{Sublacunary weight}{}\rr
  The $R$-weight $w$ is called \emph{sublacunary} if it satisfies
  \begin{equation}
    \label{eq:92}
    \lim_N\frac{w\pa*{R(N)}}{\log N}=\infty
  \end{equation}
\end{defn}
We often consider the sequence $(r_n)$ instead of the set $R$ in which
case we can use the following more convenient version of \cref{eq:92}.
\begin{equation}
  \label{eq:93}
  \lim_N\frac{\sum_{n\in[1,N]}w(r_n)}{\log r_{N+1}}=\infty
\end{equation}

Our main tool in this section is the following.
\begin{prop}[label={prop:6}]{There is a set representing the same
    measures as a bounded weight}{}\rr
  Let $w$ be a bounded, sublacunary $R$-weight.

  Then there is a set $S\subset R$ so that
  \begin{equation}
    \label{eq:94}
    \lim_N\max_{\beta\in\setT}\abs*{\setA_{s\in
        S(N)}\e(s\beta)-\setA^{w}_{r\in R(N)}\e(r\beta)}=0
  \end{equation}
  As a consequence, if the $R$-weight $w$ is good then so is the set
  $S$ and we have
  \begin{equation}
    \label{eq:95}
    \mu_{S,\beta}=\mu_{w,\beta} \fe \beta
  \end{equation}
\end{prop}

\begin{proof}
  Since we can always assume that the bound of the $R$-weight $w$ is
  $1$, \cref{prop:6} follows from the following lemma.
  \begin{lem}[label={lem:4}]{Random selection of a good set}{}\rr
    Let $\sigma$ be an $R$-weight bounded by $1$.  We assume that for
    a constant $b>0$ we have
    \begin{equation}
      \label{eq:96}
      \liminf_N\frac{\sigma\paren[\big]{R(N)}}{\log N}> b
    \end{equation}

    Let $(\Omega,P)$ be a probability space and and let
    $(X_r)_{r\in R}$ be a sequence of totally independent
    $\Omega\to \cbrace{0,1}$ random variables indexed by $R$ and with
    distribution $P(X_r=1)=\sigma(r)$ (so $P(X_r=0)=1-\sigma(r)$).
 
    Then we have
    \begin{equation}
      \label{eq:97}
      P\set*{\omega}{\sup_N\max_{\beta\in\setT}
        \frac{\abs*{\sum_{r\in
              R(N)}\paren[\Big]{X_r(\omega)-\sigma(r)}\e(r\beta)}}
        {\sqrt{(\log N)\sigma\paren[\big]{R(N)}}}<\infty}=1
    \end{equation}
  \end{lem}
  To see that \cref{prop:6} indeed follows from \cref{lem:4}, let
  $\sigma=\frac{w}{\ninfty{w}}$, so $\sigma$ is bounded by $1$.  Here
  we make a bit more complicated argument than needed to show that
  there is a rate of convergence in \cref{eq:94}.

  The sublacunarity assumption on $w$ implies that $\sigma$ is
  sublacunary.  We then have, as a consequence of \cref{eq:97}, that
  there is a measurable subset $\Omega_1$ of $\Omega$ with
  $P(\Omega_1)=1$ so that for every $\omega\in \Omega_1$ there is a
  finite positive constant $C_\omega$ with
  \begin{equation}
    \label{eq:98}
    \max_{\beta\in\setT}
    \abs*{\frac1{\sigma\pa*{R(N)}}\sum_{r\in R(N)}X_r(\omega)\e(r\beta)
      -\frac1{\sigma\pa*{R(N)}}\sum_{r\in R(N)}\sigma(r)\e(r\beta)}
    \le C_\omega\sqrt{\frac{\log N}{\sigma\paren[\big]{R(N)}}}
  \end{equation}
  For $\beta=0$, we then have
  \begin{equation}
    \label{eq:99}
    \abs*{\frac1{\sigma\pa*{R(N)}}\sum_{r\in R(N)}X_r(\omega)-1}
    \le C_\omega\sqrt{\frac{\log N}{\sigma\paren[\big]{R(N)}}}
  \end{equation}
  This implies that if we replace $\sigma\pa*{R(N)}$ by
  $\sum_{r\in R(N)}X_r(\omega)$ in
  $\frac1{\sigma\pa*{R(N)}}\sum_{r\in R(N)}X_r(\omega)\e(r\beta)$ we
  make a $O\pa*{ \sqrt{\frac{\log N}{\sigma\paren[\big]{R(N)}}}}$
  error, hence \cref{eq:98} implies
  \begin{equation}
    \label{eq:100}
    \max_{\beta\in\setT}
    \abs*{\frac1{\sum_{r\in R(N)}X_r(\omega)}\sum_{r\in R(N)}X_r(\omega)\e(r\beta)
      -\frac1{\sigma\pa*{R(N)}}\sum_{r\in R(N)}\sigma(r)\e(r\beta)}
    \le C_\omega\sqrt{\frac{\log N}{\sigma\paren[\big]{R(N)}}}
  \end{equation}
  Defining $S_\omega\subset R$ by
  \begin{equation}
    \label{eq:101}
    S_\omega\cq \set*{r}{r\in R,X_r(\omega)=1}
  \end{equation}
  we can write \cref{eq:100} as
  \begin{equation}
    \label{eq:102}
    \max_{\beta\in\setT}
    \abs*{\setA_{s\in S_\omega(N)}\e(s\beta)
      -\setA^{\sigma}_{r\in R(N)}\e(r\beta)}
    \le C_\omega\sqrt{\frac{\log N}{\sigma\paren[\big]{R(N)}}}\fe \omega\in\Omega_1
  \end{equation}
  Since $\sigma$ is a constant multiple of $w$, we can replace
  $\sigma$ by $w$ in \cref{eq:102},
  \begin{equation}
    \label{eq:103}
    \max_{\beta\in\setT}
    \abs*{\setA_{s\in S_\omega(N)}\e(s\beta)
      -\setA^{w}_{r\in R(N)}\e(r\beta)}
    \le C_\omega\sqrt{\frac{\ninfty{w}\log N}{w\paren[\big]{R(N)}}}\fe \omega\in\Omega_1
  \end{equation}
  Since $\lim_N \frac{\ninfty{w}\log N}{w\paren[\big]{R(N)}}=0$, due
  to the sublacunarity assumption on the $R$-weight $w$, we get
  \cref{eq:94} if we take $S=S_\omega$ for any $\omega\in\Omega_1$.

  \begin{proof}[Proof of \cref{lem:4}]
    To see clearly what we need to do, denote
    \begin{align*}
      Z_N(\beta)&\coloneqq\sum_{r\in R(N)} \paren[\Big]{X_r(\omega)-\sigma(r)}\e(r\beta)\\
      \intertext{and}
      t_N&\coloneqq c\cdot\sqrt{(\log N)\sigma\paren[\big]{R(N)}}
    \end{align*}
    where we'll choose the constant $c$ appropriately later. By the
    Borel-Cantelli lemma, it's enough to prove
    \begin{equation}
      \sum_N P\paren*{\max_{\beta\in\setT}\abs*{Z_N(\beta)}\ge t_N}<\infty\label{eq:104}
    \end{equation}
    The first idea in proving \cref{eq:104} is that we do not have to
    take the maximum over all $\beta\in\setT$, but over a finite
    subset $B$ of $\setT$ which contains $N^3$ elements\footnote{In
      fact, we can take a set $B$ with as few elements as $10N$, but
      in our applications, $10N$ won't improve anything over $N^3$.}.
    Since the degree of the trigonometric polynomial $Z_N(\beta)$ is
    at most $N$, we can readily see that
    $\sup_{\beta\in\setT}|Z_N'(\beta)|\le
    N^2\sup_{\beta\in\setT}|Z_N(\beta)|$.  It follows that if we take
    $B_N\subset \setT$ to be an arithmetic progression with
    $|B_N|=N^3$ then
    \begin{equation}
      \label{eq:105}
      \max_{\beta\in\setT}\abs*{Z_N(\beta)}\le 2\max_{\beta\in B_N}\abs*{Z_N(\beta)}
    \end{equation}
    Hence we have
    \begin{equation}
      \label{eq:106}
      P\paren*{\max_{\beta\in\setT}\abs*{Z_N(\beta)}\ge t_N}\le
      P\paren*{\max_{\beta\in B_N}\abs*{Z_N(\beta)}\ge t_N/2}
    \end{equation}
    Using the union estimate, we get
    \begin{equation}
      \label{eq:107}
      P\pa*{\max_{\beta\in B_N}\abs*{Z_N(\beta)}\ge t_N/2}\le
      N^3\max_{\beta\in B_N}P\pa*{\abs*{Z_N(\beta)}\ge t_N/2}
    \end{equation}
    Thus \cref{eq:104} follows from
    \begin{equation}
      \label{eq:108}
      \sum_NN^3\max_{\beta\in B_N}P\paren*{\abs*{Z_N(\beta)}\ge t_N/2}<\infty
    \end{equation}
    This follows if we prove
    \begin{equation}
      \label{eq:109}
      P\paren[\Big]{\abs*{Z_N(\beta)}\ge t_N/2}<\frac2{N^5}
      \fe \beta\in\setT
    \end{equation}
    To prove \cref{eq:109}, we use the Bernstein-Chernoff exponential
    estimate\autocite[Exercise 1.3.4 with
    $t=\lambda\sigma$]{tao_vu_2006}.  This estimate says that if
    $Y_k$, $k\in[1,K]$, are totally independent, mean zero, complex
    valued random variables with $\abs*{Y_k}\le 1$, then
    \begin{equation}
      \label{eq:110}
      P\paren*{\abs*{\sum_{k\in[1,K]} Y_k}\ge t}
      \le
      4\max\cbrace*{\exp\paren*{-\frac{t^2/8}{\sum_{k\in[1,K]}\setE|Y_k|^2}},\exp\paren*{-t/3}} 
      \fe  t>0
    \end{equation}

    Take $K=\#R(N)$ and $Y_r(\beta)\coloneqq(X_r-\sigma(r))e(r\beta)$
    for $r\in R(N)$.  Then $ |Y_r(\beta)|\le 1$ so the $Y_r$ satisfy
    the assumption in Bernstein's inequality, hence, with $t=t_N/2$,
    we get the estimate
    \begin{equation}
      \label{eq:111}
      P\paren[\Big]{\abs*{Z_N(\beta)}\ge t_N/2}
      \le
      4\max\cbrace*{\exp\paren*{-\frac{t_N^2/32}{\sum_{r\in R(N)}\setE|Y_r|^2}},\exp\paren*{-t_N/6}}
    \end{equation}
    Since $\setE |Y_r(\beta)|^2= \sigma(r)(1-\sigma(r))$ we have
    \begin{equation}
      \label{eq:112}
      \sum_{r\in R(N)}\setE |Y_r(\beta)|^2
      \le \sigma\paren[\big]{R(N)}
    \end{equation}
    Using that $t_N=c\cdot\sqrt{(\log N)\sigma\paren[\big]{R(N)}}$, we
    get
    \begin{align*}
      \frac{t^2_N/32}{\sum_{r\in R(N)}\setE |Y_r(\beta)|^2}
      &=\frac{(c^2/32)(\log N)\sigma\paren[\big]{R(N)}}
        {\sum_{r\in R(N)}\setE |Y_r(\beta)|^2}\\
      \intertext{using the estimate in \cref{eq:112}} 
      &\ge \frac{(c^2/32)(\log N)\sigma\paren[\big]{R(N)}}
        {\sigma\paren[\big]{R(N)}}\\
      &=(c^2/32)(\log N)
    \end{align*}
    hence
    \begin{equation}
      \label{eq:113}
      \exp\paren*{-\frac{t^2_N/32}{\sum_{r\in R(N)}\setE|Y_r(\beta)|^2}}
      \le e^{-(c^2/32)(\log N)}
    \end{equation}
    In order to get $e^{-(c^2/32)(\log N)}\le N^{-5}=e^{-5\log N}$, we
    need to have $c^2/32 \ge 5$, so it enough to have, since
    $\sqrt{160}<13$,
    \begin{equation}
      \label{eq:114}
      c\ge 13
    \end{equation}
    We also have
    \begin{align*}
      t_N/6&=(c/6)\cdot\sqrt{(\log N)\sigma\paren[\big]{R(N)}}\\
      \intertext{by the assumption in \cref{eq:96} for all large enough $N$}
           &\ge (c/6)\sqrt b\log N
    \end{align*}
  
    It follows that
    \begin{equation}
      \label{eq:115}
      \exp(-t_N/6)\le e^{-(c/6)\sqrt b\log N}
    \end{equation}
    We again need to have
    $e^{-(c/6)\sqrt b\log N}\le N^{-5}=e^{-5\log N}$ which poses the
    requirement $(c/6)\sqrt b\ge 5$, that is,
    \begin{equation}
      \label{eq:116}
      c\ge \frac{30}{\sqrt b}
    \end{equation}
    Thus choosing the constant $c$ large enough to satisfy both
    \cref{eq:116,eq:114}, the estimate in \cref{eq:111} implies the
    one in \cref{eq:109}.
  \end{proof}
\end{proof}

\subsection{Notes to \cref{lem:4}}
\label{sec:notes-creflem:5}
The type of method we used in \cref{lem:4} to estimate trigonometric
polynomials goes back to Salem-Zygmund\autocite[Chapter IV]{MR65679}.
Recent developments have been given for example by
Weber\autocite{MR1768824} and by Cohen-Cuny\autocite{MR2206342}.

\section{Absolute continuity and positive mean}
\label{sec:absol-cont-posit}

The general theme of this section is that if a good set or weight has
positive mean then it can represent only an absolutely continuous
measure.  To be specific, we want to prove \cref{item:2,item:4}.

Our standing assumption is that $R$ is a sublacunary good set, and
hence we suppress it in our notation for the limit measure, so we
write $\mu_\alpha$ instead of $\mu_{R,\alpha}$.

\subsection{Proof of \cref{item:2}}
\label{sec:proof-cref-crefthm:p}

\Cref{item:1}  says that if $\rho$
is an unsigned $L^{\infty}\pa*{\mu_\alpha}$ function with
$\mu_{\alpha}(\rho)>0$ and $\alpha$ is an irrational number then
$\rho$ can be represented at $\alpha$ with a good set $S\subset R$
satisfying
$\M_R(S)=\frac{\mu_{\alpha}(\rho)}{\norm*{\rho}_{L^\infty\pa*{\mu_{\alpha}}}}$.
We have proved this in \cref{sec:proof-bounded-rho}.

\Cref{item:2} says that the
converse is also true: if the good set $S\subset R$ satisfies
$\noner{S}>0$ then the limit measure $\mu_{S,\beta}$ is absolutely
continuous with respect to $\mu_\beta$ with a bounded Radon-Nikodym
derivative $\rho_\beta$ which must satisfy
\begin{equation}
  \label{eq:117}
  \norm*{\rho_\beta}_{L^\infty\pa*{\mu_{\beta}}}\le \frac{1}{\noner{S}} \fe \beta
\end{equation}
This is what we intend to prove now.  Since $\beta\in\setT$ is fixed,
we suppress it in our notation, so for example we write $\mu$ for
$\mu_\beta$ and $\mu_S$ for $\mu_{S,\beta}$.  Let $S\subset R$ be such
that $\noner{S}>0$. Let us first show that for every $\beta$, the limit
measure $\mu_{S}$ is absolutely continuous with respect to $\mu$.

This will follow if we show that for every Borel set $B$ we have
\begin{equation}
  \label{eq:118}
  \mu_{S}(B)\le \frac1{\noner{S}} \mu(B)
\end{equation}
To see this, it's enough to show that for every unsigned, continuous
function $\phi$ on $\setT$ we have
\begin{equation}
  \label{eq:119}
  \mu_{S}(\phi)\le \frac1{\noner{S}} \mu(\phi)
\end{equation}
Let $\phi$ be such a function and let $N_1<N_2<\dots$ be a sequence of
indices for which $\lim_k\setA_{r\in R(N_k)}\setone_S(r)=\noner{S}$. We
can then estimate as
\begin{align*}
  \mu_{S}(\phi) & =\lim_N\setA_{s\in S(N)}\phi(s\beta)                                  \\
                & =\lim_k\setA_{s\in S(N_k)}\phi(s\beta)                                \\
                & =\lim_k\frac{1}{\setA_{r\in R(N_k)}\setone_S(r)}
                  \setA_{r\in R(N_k)}\setone_S(r)\phi(r\beta) \\
                &\le\limsup_k\frac{1}{\setA_{r\in R(N_k)}\setone_S(r)}
                  \setA_{r\in R(N_k)}\phi(r\beta) \\
  \intertext{ since
  $\lim_k\frac{1}{\setA_{n\in [1,N_k]}\setone_S(n)}=\frac1{\noner{S}}$ and
  $\lim_N\setA_{r\in R(N_k)}\phi(r\beta)$ exists,}
                & =\frac1{\noner{S}}\lim_N \setA_{r\in
                  R(N_k)}\phi(r\beta)       \\
  \intertext{since $\lim_N\setA_{r\in R(N_k)}\phi(r\beta)=\mu(\phi)$,}
                & =\frac1{\noner{S}} \cdot \mu(\phi)
\end{align*}
proving \cref{eq:119}.

Now, inequality
$\mu\pa*{\rho_{\beta}\setone_B} \le \frac1{\noner{S}} \mu(B)$ applied
to the Borel set $B=\cbrace*{\rho_{\beta} > \frac1{\noner{S}}}$ readily
gives \cref{eq:117}.

\subsection{Proof of \cref{item:4}}
\label{sec:proof-cref-crefthm:r}

Since the good set $R$ is fixed, we suppress it in our notation for
the limit measures, so we write $\mu_\alpha$ instead of
$\mu_{R,\alpha}$. 

In this section, we need to prove that if the good $R$-weight $w$ has
positive relative $\mathbf 1$-norm and it is integrable, that is, it
can be approximated arbitrary closely by bounded, good $R$-weights in
$\noner{}$-seminorm, then for every irrational $\beta$ the limit
measure $\mu_{w,\beta}$ is absolutely continuous with respect to
$\mu_\beta$.

Let $(w_k)$ be a sequence of good, bounded $R$-weights which converges
to $w$ in $\noner{}$-seminorm, $\lim_k\noner{w_k-w}=0$. Since
$\abs*{\noner{w_k}-\noner{w}}\le \noner{w_k-w}$, we have
$\lim_k\noner{w_k}=\noner{w}>0$, and hence we can assume without loss of
generality that $\noner{w_k}>0$ for every $k$.  That for every $k$ the
measure $\mu_{w_k,\beta}$ is absolutely continuous with respect to
$\mu_{\beta}$ for every $\beta$ follows from
\begin{equation}
  \label{eq:120}
  \mu_{w_k,\beta}(B)\le \frac{\ninfty{w_k}}{\noner{w_k}}\mu_{\beta}(B)
  \fe \text{ Borel set }B
\end{equation}
The proof of this inequality is almost identical to the proof of the
inequality in \cref{eq:118}, hence we omit it.

Now the rest of the proof of \cref{thm:representation_by_weights}
follows from \cref{lem:3}.

\section{Proof of \cref{thm:general_representability} for unbounded
  $\rho$}
\label{sec:proof-crefthm:g-unbo}
In this section we again work with a fixed, sublacunary good set
$R\subset \setN$ which we view as a sequence $(r_n)$ arranged in
increasing order. We omit $R$ from our notation for the limit
measures, so we write $\mu_\beta$ instead of $\mu_{R,\beta}$. We also
fix an irrational number $\alpha$.  Let $\rho\in L^1\pa*{\mu_\alpha}$.
We want to find a good set\footnote{Which can be shown to be
  sublacunary as a consequence of the sublacunarity of the weight $v$
  below.} $S\subset R$ which represents $\rho$ at $\alpha$.  According
to \cref{prop:4} there is a good $R$-weight $w$ which represents
$\rho$ at $\alpha$.  Since this weight $w$ has positive relative mean
with respect to $R$, it's a sublacunary weight.  The problem is that,
as per construction, $w$ is not a bounded weight if $\rho$ is
unbounded, hence we cannot use our \cref{prop:6} to construct the
desired set $S$.

Our main job in this section hence will be to construct a good
$R$-weight $v$ satisfying the following properties
\begin{itemize}
\item $v$ is bounded by $1$;
  
\item $v$ is sublacunary;
  
\item $v$ represents the same measure at every $\beta$ as $w$, so
  $\mu_{v,\beta}= \mu_{w,\beta}$ for every $\beta$.
\end{itemize}
Once we have such a good weight $v$, we can use \cref{prop:6} to
``construct'' the desired good set $S$.

The weight $v$ will be of the form $\sigma\cdot w$ where the weight
$\sigma$ is a \emph{decreasing} weight, that is,
$\sigma(r_n)\ge \sigma(r_{n+1})$ for every $n\in\setN$.  That a weight
$v$ of this form represents the same measures everywhere is a
consequence of a general but probably familiar result---our main new
tool in this section.  Not to get bugged down with unnecessary
notation, we will state the result for weights with the reindexing
$w(n)=w(r_n)$ with which $R$ weights become $\setN$-weights.

First recall the definition of a dissipative sequence of measures on
$\setN$.
\begin{defn}[label={defn:8}]{Dissipative sequence of measures}{}\rr
  Let $(v_N)_{N\in\setN}$ be a sequence of finite measures on $\setN$.

  We say, the sequence $(v_N)_{N\in\setN}$ is \emph{dissipative} if
  \begin{equation}
    \label{eq:121}
    \lim_N\frac{v_N(j)}{v_N(\setN)}=0, \text{ for every }j\in\setN
  \end{equation}
\end{defn}

\begin{prop}[label={prop:7}]{Decreasing weights preserve limits}{}\rr  
  Let $w$ be a weight, $(\sigma_N)_{N\in\setN}$ be a sequence of
  finite measures on $\setN$ and let $\x=(x_n)$ be a sequence from a
  normed space $(X,\norm{})$.  Denoting $v_N\cq \sigma_N\cdot w$, we
  assume the following
  \begin{propenum}
  \item Each $\sigma_N$ has finite support.
  \item The sequence $(v_N)$ is dissipative.
  \item For each $N$ the measure $\sigma_N$ is decreasing,
    $\sigma_N(1)\ge \sigma_N(2)\ge \dots$.
  \item The sequence $\pa*{\setA_{n\in[1,N]}^{w}x_n}_N$ converges to
    some $y\in X$,
    \begin{equation}
      \label{eq:122}
      \lim_N\setA_{n\in[1,N]}^{w}x_n=y
    \end{equation}
  \end{propenum}
   
  Then, the sequence $\pa*{\setA^{v_N}_{j\in\setN}x_j}_N$ of averages
  converge to the same limit as the $w$-weighted averages,
  \begin{equation}
    \label{eq:123}
    \lim_N\setA^{v_N}_{j\in\setN}x_j=y
  \end{equation}
  At the heart of this result is the following quantitative estimate:
  For a given $\epsilon>0$, if $K$ is such that
  $\norm*{\setA_{n\in[1,j]}^{w}x_n-y}<\epsilon$ for $j\ge K$ then we
  have
  \begin{equation}
    \label{eq:124}
    \norm*{\setA^{v_N}_{j\in\setN}x_j-y}
    \le \epsilon+\max_{j\in [1,K]}\norm*{\setA_{n\in[1,j]}^{w}x_n-y}
    \cdot \frac{v_N([1,K])}{v_N(\setN)}
  \end{equation}
  for every $N\ge K$.
\end{prop}
Note that the estimate in \cref{eq:124} indeed implies the conclusion
of the proposition in \cref{eq:123}.  To see this, let $N\to\infty$ in
\cref{eq:124}. Then, since $(v_N)$ is a dissipative sequence so
$\lim_N\frac{v_N([1,K])}{v_N(\setN)}=0$, we get that
$\limsup_N\norm*{\setA^{v_N}_{j\in\setN}x_j-y}\le\epsilon$.  Since
$\epsilon>0$ is arbitrary, we get
$\lim_N\norm*{\setA^{v_N}_{j\in\setN}x_j-y}=0$.

\begin{proof}[Proof of \cref{prop:7}]
  The main idea of the proof is to write $\setA^{v_N}_{j\in\setN}x_j$
  as an average of the $w$-averages with respect to another measure
  $q_N$ on $\setN$
  \begin{equation}
    \label{eq:125}
    \setA^{v_N}_{j\in\setN}x_j=\setA^{q_N}_{j\in\setN}\setA^{w}_{n\in[1,j]}x_n
    \text{ for all }N
  \end{equation}
  These measures $q_N$ will also satisfy
  \begin{equation}
    \label{eq:126}
    q_N(\setN)=v_N(\setN)\text{ for every }N\in\setN
  \end{equation}
  The measure $q_N$ appears during performing summation by parts:
  setting $\sigma_N(0)\coloneqq 0$, $w(0)\cq 0$ and $x_0\coloneqq 0$,
  we have
  \begin{align*}
    \setA^{v_N}_{j\in\setN}x_j
    &=\frac1{v_N(\setN)}\sum_{j\in\setN}\sigma_N(j)w(j)x_j\\
    &=\frac1{v_N(\setN)}
      \sum_{j\in\setN}\sigma_N(j)
      \paren*{\sum_{n\in[1,j]}w(n)x_n-\sum_{n\in[j-1]}w(n)x_n}\\
    &=\frac1{v_N(\setN)}
      \sum_{j\in\setN}\paren[\Big]{\sigma_N(j)-\sigma_N(j+1)}
      \sum_{n\in[1,j]}w(n)x_n\\
    &=\frac1{v_N(\setN)}
      \sum_{j\in\setN}\paren[\Big]{\sigma_N(j)-\sigma_N(j+1)}
      \cdot w([1,j]) \cdot \setA^{w}_{n\in[1,j]}x_n
  \end{align*}
  Thus, defining the measure $q_N$ by
  \begin{equation}
    \label{eq:127}
    q_{N}(j)\coloneqq \paren[\Big]{\sigma_N(j)-\sigma_N(j+1)}\cdot w([1,j]), \text{
      for }j\in\setN
  \end{equation}
  we get the identity in \cref{eq:125} once we show that $q_N$ really
  is a measure satisfying \cref{eq:126}. That $q_N(j)$ is unsigned
  follows from the assumption that the sequence
  $(\sigma_N(j))_{j\in\setN}$ is decreasing for fixed $N$. That
  $q_N(\setN)=v_N(\setN)$ follows by setting $x_j=1$ for every $j$ in
  the summation by parts argument above since then we get exactly
  $q_N(\setN)=v_N(\setN)$:
  \begin{align*}
    1&=\setA^{v_N}_{j\in\setN}1\\
     &=\frac1{v_N(\setN)}
       \sum_{j\in\setN}\paren[\Big]{\sigma_N(j)-\sigma_N(j+1)}
       \cdot w([1,j]) \cdot \setA^{w}_{n\in[1,j]}1\\
     &=\frac1{v_N(\setN)}\sum_{j\in\setN}q_N(j)\cdot1\\
     &=\frac1{v_N(\setN)} \cdot q_N(\setN)
  \end{align*}

  Using the now obvious identity $y=\setA^{q_N}_{j\in\setN}y$ together
  with \cref{eq:125}, we can now write $\setA^{v_N}_{j\in\setN}x_j-y$
  as
  \begin{equation}
    \label{eq:128}
    \setA^{v_N}_{j\in\setN}x_j-y
    =\setA^{q_N}_{j\in\setN}\paren*{\setA^{w}_{n\in[1,j]}x_n-y}
  \end{equation}

  Let $\epsilon>0$.  Since we assumed
  $\lim_N \setA^{w}_{n\in[1,N]}x_n=y$, there is an $K=K(\epsilon)$ so
  that
  \begin{equation}
    \label{eq:129}
    \norm*{\setA^w_{n\in[1,j]}x_n-y}<\epsilon, \text{ for }j\ge K
  \end{equation}
  Splitting the summation on $j$ in
  $\setA^{q_N}_{j\in\setN}\paren*{\setA^w_{n\in[1,j]}x_n-y}$ into two
  parts at $K$ and using the triangle inequality, we get the estimate
  \begin{multline}
    \label{eq:130}
    \norm*{\setA^{q_N}_{j\in\setN}\paren*{\setA^w_{n\in[1,j]}x_n-y}}\le
    \norm*{\frac1{q_N(\setN)}\sum_{j\in[1,K]}q_N(j)\paren*{\setA^w_{n\in[1,j]}x_n-y}}\\
    +\norm*{\frac1{q_N(\setN)}\sum_{j> K}q_{N}(j)\paren*{\setA^w_{n\in[1,j]}x_n-y}}
  \end{multline}

  We can estimate the first term as
  \begin{equation}
    \label{eq:131}
    \norm*{\frac1{q_N(\setN)}\sum_{j\in[1,K]}q_{N}(j)\paren*{\setA^w_{n\in[1,j]}x_n-y}}
    \le \max_{j\in[1,K]}\norm*{\setA^w_{n\in[1,j]}x_n-y}\cdot \frac{q_{N}([1,K])}{q_N(\setN)}
  \end{equation}
  Using the definition of $q_N(j)$ as given in \cref{eq:127}, we can
  estimate $q_{N}([1,K])$ as
  \begin{align*}
    q_{N}([1,K])
    &=\sum_{j\in[1,K]}\pa[\Big]{\sigma_N(j)-\sigma_N(j+1)}\cdot
      w([1,j])\\
    &=\sum_{j\in[1,K]}\sigma_N(j)\pa[\Big]{w([1,j])-w([j-1])}
      -\sigma_N(K+1)w([1,K])\\
    &=\sum_{j\in[1,K]}\sigma_N(j)w(j) -\sigma_N(K+1)w([1,K])\\
    &=\sum_{j\in[1,K]}v_N(j)-\sigma_N(K+1)w([1,K])\\
    &\le v_N([1,K])
  \end{align*}
  Using this estimate and that $q_{N}(\setN)=v_{N}(\setN)$ in
  \cref{eq:131} we get
  \begin{equation}
    \label{eq:132}
    \norm*{\frac{1}{q_N(\setN)}
      \sum_{j\in[1,K]}q_{N}(j)\paren*{\setA^w_{n\in[1,j]}x_n-y}}
    \le \max_{j\in[1,K]}\norm*{\setA^w_{n\in[1,j]}x_n-y}\cdot
    \frac{v_{N}([1,K])}{v_N(\setN)}
  \end{equation}

  The second term in \cref{eq:130} can be estimated, using
  \cref{eq:129}, as
  \begin{equation}
    \label{eq:133}
    \norm*{\frac{1}{q_N(\setN)}
      \sum_{j> K}q_{N}(j)\paren*{\setA^w_{n\in[1,j]}x_n-y}}\le \epsilon
  \end{equation}
  Putting the estimates in \cref{eq:132,eq:133} into \cref{eq:130} and
  using the identity in \cref{eq:128} we get \cref{eq:124}.
\end{proof}

\begin{cor}[label={cor:1}]{Decreasing weights preserve limit measures
    of weights}{}\rr 
  Let $w$ and $\sigma$ be $R$-weights.  Denoting $v\cq \sigma\cdot w$,
  we assume the following

  \begin{corenum}
  \item $v(R)=\infty$.
  \item The $R$-weight $\sigma$ is decreasing
    $\sigma(r_1)\ge \sigma(r_2)\ge \dots$.
  \item The $R$-weight $w$ is good.
  \end{corenum}

  Then $v$ is a good $R$-weight and it represents the same measures
  everywhere as $w$,
  \begin{equation}
    \label{eq:134}
    \mu_{v,\beta}=\mu_{w,\beta}\fe \beta
  \end{equation}
\end{cor}
\begin{proof}
  We need to show that for a given $\beta$ we have
  \begin{equation}
    \label{eq:135}
    \lim_N\setA^v_{n\in[1,N]}\e(r_n\beta)=\mu_{w,\beta}(\e)
  \end{equation}
  to do this, use \cref{prop:7} with $\sigma_N$ defined by
  \begin{equation}
    \label{eq:136}
    \sigma_N(n)\cq \sigma(r_n)\setone_{[1,N]}(n)
  \end{equation}
  and $(x_n)$ defined by
  \begin{equation}
    \label{eq:137}
    x_n\cq \e(r_n\beta)
  \end{equation}
\end{proof}
Let us now go back to our good $R$-weight $w$ which represents $\rho$
at $\alpha$.  Since we now consider $R$ as the sequence $(r_n)$, its
sublacunarity assumption is expressed more conveniently as
\begin{equation}
  \label{eq:138}
  \lim_N\frac{N}{\log r_N}=\infty
\end{equation}
as we noted in \cref{eq:11}.  Since the weight $w$ satisfies
$\M_R(w)>0$, \cref{eq:138} implies that $w$ is also sublacunary.
Writing $\frac{N+1}{\log r_{N+1}}=\frac{N+1}{N}\cdot \frac{N}{\log
  r_{N+1}}$ we see that \cref{eq:138} implies
\begin{equation}
  \label{eq:139}
  \lim_N\frac{N}{\log r_{N+1}}=\infty
\end{equation}
According to the proof of \cref{lem:1}, we obtained $w$ as the limit
of a sequence $(w_k)$ of bounded good weights by pasting the $w_k$
together piece by piece in a sense that after choosing indices
$N_1<N_2<\dots$, we define $w$ to be equal $w_k$ on the interval
$(N_k,N_{k+1}]$
\begin{equation}
  \label{eq:140}
  w(r_n)\cq \sum_kw_k(r_n)\setone_{(N_k,N_{k+1}]}(n)
\end{equation}
Now, in order to obtain a good weight $v$ which is bounded by $1$ and
would represent the same measures as $w$, we could do the following.
Define $\sigma$ by
\begin{equation}
  \label{eq:141}
  \sigma(r_n)\cq
  \frac1{\max_{j\in [1,k]}\ninfty{w_j}}\cdot \setone_{(N_k,N_{k+1}]}(n)
\end{equation}
Then $\sigma$ is decreasing and $v\cq \sigma w$ is bounded by $1$.
The remaining issue is to ensure that $v$ is sublacunary, and to do
that it's enough to ensure 
\begin{equation}
  \label{eq:142}
  \lim_N\frac{\sum_{n\in[1,N]}v(r_n)}{\log r_{N+1}}=\infty
\end{equation}
as we noted in \cref{eq:93}.  This would also ensure that both
$\sigma$ and $v$ are weights.  It turns out that in the recursive
process of choosing the indices $(N_k)$ if we choose $N_k$ large
enough compared to $N_{k-1}$ we can ensure that $v$ is sublacunary.
We want to show that we can choose the indices $N_k$ so that we will
have \cref{eq:142}.  Let us note that in the proof of \cref{lem:1} the
choice of $N_k$ is flexible, since it just has to be large enough to
staisfy some criteria.  So we now add one additional criterion, namely
we want to choose $N_k$ large enough to also satisfy
\begin{equation}
  \label{eq:143}
  \frac{N}{\max_{j\in[1,k]}\ninfty{w_j}} >k \log r_{N+1} \fe N\ge N_k
\end{equation}
This is possible because of the sublacunarity condition in
\cref{eq:139}, and \cref{eq:143} ensures the sublacunarity of $v$,
that is, \cref{eq:142}.
  
That $v$ represents the same measures as $w$ at every $\beta$ follows
from \cref{cor:1}. As in the last step of our proof of
\cref{thm:general_representability}, we use \cref{prop:6} to show the
existence of a good set $S\subset R$ which represents the same
measures as $v$ at every $\beta$, hence at $\beta=\alpha$ we have
$\mu_{S,\alpha}=\rho\mu_\alpha$.

\section{The limit measure at rational points}
\label{sec:limit-measure-at}
In this section we want to prove \cref{thm:representation_rational}.
The base set is $\setN$ which we suppress in our notation, so we write
$\mu_\beta$ instead of $\mu_{\setN,\beta}$.

Given the probability measure $\nu$ on $\setT_q$ and the rational
number $\frac{a}{q}$, $\gcd(a,q)=1$, let us see what properties a good
set $S$ would need to have so that $\mu_{S,a/q}=\nu$.

Introducing the sets $S_j$ by
\begin{equation}
  \label{eq:144}
  S_j\cq \set*{s}{s\in S,sa\equiv j\pmod q}, \fe j\in[1,q]
\end{equation}
let us write, using that the $S_j$ are pairwise disjoint,
\begin{align*}
  \setA_{s\in S(N)}
  &=\frac{1}{\# S(N)}\sum_{s\in S(N)}\delta_{sa/q}\\
  &=\frac{1}{\# S(N)}\sum_{j\in[1,q]}\sum_{s\in S_j(N)}\delta_{j/q}\\
  &= \sum_{j\in[1,q]}\frac{\# S_j(N)}{\# S(N)}\delta_{j/q}
\end{align*}
If we make the assumption\footnote{In fact, the existence of
  $\lim_N\frac{\# S_j(N)}{\# S(N)}$ follows from $S$ being a good
  set.}  that $\lim_N\frac{\# S_j(N)}{\# S(N)}$ exists for every $j$
then, letting $N\to\infty$, we get
\begin{equation}
  \label{eq:145}
  \mu_{S,a/q}=\sum_{j\in[1,q]}\delta_{j/q}\lim_N\frac{\# S_j(N)}{\# S(N)}
\end{equation}
Since $\mu_{S,a/q}$ is supposed to be equal $\nu$, we get
\begin{equation}
  \label{eq:146}
  \lim_N\frac{\# S_j(N)}{\# S(N)} =\nu(j/q)
\end{equation}
This gives us the idea how to construct $S$: we start out from the set
$R_j$ defined by
\begin{equation}
  \label{eq:147}
  R_j\cq \set*{n}{na\equiv j\pmod q}, \fe j\in[1,q]
\end{equation}
Note that $R_j$ is a full residue class $\mod q$, namely, if $j'$
denotes the unique solution to the congruence $j'a\equiv j\pmod q$,
then $R_j$ is the arithmetic progression $\set*{kq+j'}{k\in\setN}$.
Note that $R_j$ is a good set, as are all arithmetic progressions.  We
clearly have
\begin{equation}
  \label{eq:148}
  \M(R_j)=\frac1q \fe j\in[1,q]
\end{equation}
Now what remains is to find a set $S_j\subset R_j$ with relative mean
$\nu\pa*{\frac{j}{q}}$ and make sure that $S_j$ is a good set.  Let
$\gamma$ be an irrational number and consider
\begin{equation}
  \label{eq:149}
  S_j\cq \set*{r}{r\in R_j, r\gamma\in
    \left[0,\nu\pa*{\frac{j}{q}}\right)} \fe j\in[1,q]
\end{equation}
Using \cref{prop:1} with $\alpha=\gamma$ and $R=R_j$, we deduce that
$S_j$ is a good set with $\M_{R_j}(S_j)=\nu\pa*{\frac{j}{q}}$, as
desired.  We finally define $S$ as
\begin{equation}
  \label{eq:150}
  S\cq \bigcup_{j\in[1,q]} S_j
\end{equation}
The set $S$ is good since it's the finite union of pairwise disjoint
good sets with mean.  Indeed, we have
$\M(S_j)=\frac1q \cdot \nu\pa*{\frac{j}{q}}$ and hence
$\M(S)=\frac1q$.

\section{Examples}
\label{sec:examples}

\subsection{Two good sets, but their intersection has no mean.}
\label{sec:two-good-sets}

Here we construct randomly two good sets, $R,S$ with $\M(R)=\M(S)=1/2$
but $\M(R\cap S)$ doesn't exist.

Let $(X_n)$ be a iid sequence of random variables on the probability
space $(\Omega,P)$, modeling fair coin flipping, so with distribution
$P(X_n=1)=P(X_n=0)=1/2$.  Let us also consider another sequence of
random variables $(Y_n)$ defined by
\begin{equation}
  \label{eq:151}
  Y_n=
  \begin{cases}
    X_n&\text{ if } n\in [2^k,2^{k+1})\text{ for even }k\\
    1-X_n&\text{ if } n\in [2^k, 2^{k+1})\text{ for odd }k
  \end{cases}
\end{equation}
The $(Y_n)$ is also an iid sequence with the same distribution as the
$(X_n)$.  Define the sets $R^\omega,S^\omega$ by
$R^\omega\cq\set*{n}{X_n(\omega)=1}$ and
$S^\omega\cq\set*{n}{Y_n(\omega)=1}$. By \cref{lem:4} both $R^\omega$
and $S^\omega$ are good sets almost surely with
$\M(R^\omega)=\M(S^\omega)=1/2$. We claim that
$\M(R^\omega\cap S^\omega)$ almost surely doesn't exists.  To see
this, denote $T^\omega\cq R^\omega\cap S^\omega$ and observe that if
$\M(T^\omega)$ existed then
$\lim_k\frac{T^\omega\cap [2^k,2^{k+1})}{2^k}$ would exist.  But,
denoting by $O$ the odd numbers and by $E$ the even numbers, we almost
surely have
\begin{align*}
  \lim_{k\in O} \frac{T^\omega\cap [2^k,2^{k+1})}{2^k}&=0\\
  \lim_{k\in E} \frac{T^\omega\cap [2^k,2^{k+1})}{2^k}&=\frac12
\end{align*}

\subsection{$R_1\cup R_2$ and $R_1\cap R_2$ have means but are not
  good}
\label{sec:r_1cup-r_2-r_1cap}

Here is an example of two good sets $R_1$ and $R_2$ each with mean
$2/3$, $\M(R_1\cap R_2)=1/2$ but $R_1\cap R_2$ is not good and
$\M(R_1\cup R_2)=5/6$ but $R_1\cup R_2$ is not good.

Both sets will be defined in blocks of intervals.  Partition $\setN$
into a sequence of disjoint intervals $I_n$ so that their lengths go
to infinity but slower than the left endpoints go to infinity.  For
example, $I_n=[n^2,(n+1)^2)$ will do.

The first good set $R_1$ will contain all i\textbf{\emph{N}}tegers
from $I_1$, then only \textbf{\emph{O}}dd numbers from $I_2$ then
\textbf{\emph{E}}ven numbers from $I_3$ then repeat this pattern for
$I_4,I_5,I_6$ etc:

\begin{equation}
  \label{eq:152}
  N O E N O E\dots
\end{equation}

The set $R_2$ is defined similarly, except it will have one pattern in
intervals $J_k\coloneqq [3^k,3^{k+1})$ for even $k$ and another for
odd $k$.
\begin{equation}
  \label{eq:153}
  E O N E O N \dots\text{ for even }k
\end{equation}
\begin{equation}
  \label{eq:154}
  O N E O N E\dots\text{ for odd }k
\end{equation}

Both of these sets are good and they represent the same (uniform)
measure at every $\beta$.

The intersection $R_1\cap R_2$ has the patterns
\begin{equation}
  \label{eq:155}
  E O E E O E\dots\text{ for even }k
\end{equation}
\begin{equation}
  \label{eq:156}
  O O E O O E \dots\text{ for odd }k
\end{equation}

Clearly $\M(R_1\cap R_2)=1/2$ but the average of $\e(n/2)$ is
different on $J_k$ for even $k$ from those on odd $k$: for even $k$
the average will go to $1/3$ while for odd $k$ it goes to $-1/3$.

As for the union $R_1\cup R_2$, it has the patterns
\begin{equation}
  \label{eq:157}
  N O N N O N\dots\text{ for even }k
\end{equation}
\begin{equation}
  \label{eq:158}
  N N E N N E\dots\text{ for odd }k
\end{equation}
Clearly $\M(R_1\cup R_2)=5/6$ but the average of $\e(n/2)$ is
different on $J_k$ for even $k$ from those on odd $k$: for even $k$
the average will go to $-1/3$ while for odd $k$ it goes to $1/3$.

\subsection{Open set $U$ with visit set $\set{n}{n\alpha \in U}$ not
  good}
\label{sec:open-set-u}

Let $\alpha$ be an irrational number in the torus $\setT$. We
show that there exists an open subset $U$ of the torus such that the
sequence $\pa*{\setA_{n\in [1,N]} \setone_U(n\alpha)}_N$ does not
converge when $N$ goes to infinity.

\marginnote[-1cm]{The construction does not use at all the group
  structure or the dimensional properties of the torus. This can be
  extended in a general context of a sequence in a compact metric
  space with a non purely atomic asymptotic distribution.}

We want to construct an open subset $U$ of the torus and an increasing
sequence of positive integers $(N_k)_{k\geq0}$ such that the averages
$\setA_{n\in \sbrack*{1,N_{2k}}} \setone_U(n\alpha)$, $k=0,1,2,\dots$,
with even indices are large whereas the averages
$\setA_{n\in \sbrack*{1,N_{2k+1}}} \setone_U(n\alpha)$, $k=0,1,2,\dots$
with odd indices are small.

The sequence $(N_k)$ will be constructed by induction and each $N_k$
will be associated to $\epsilon_k\cq 1/(2^{k+4}N_k)$. In this
induction process, we construct also a sequence of open subsets
$(U_k)_{k\geq0}$.

We start with $N_0>1$ fixed and we define
\begin{equation*}
  U_0\cq\bigcup_{n\in [1,N_0]}\pa*{n\alpha-\epsilon_0,n\alpha+\epsilon_0}
\end{equation*}
We have of course
\begin{equation*}
  \setA_{n\in \sbrack*{1,N_{0}}} \setone_{U_0}(n\alpha)=1 \quad\text{and}\quad 0<\lambda\pa*{\overbar{U_0}}\leq 2N_0\epsilon_0
\end{equation*}

This is the initial step of our construction. In order to be
understandable, let us describe the two next steps.

By the uniform distribution of the sequence $(n\alpha)_n$ in the
torus, there exists a number $N_1>N_0$ such that
\begin{equation*}
  \setA_{n\in \sbrack*{1,N_{1}}}
  \setone_{\overbar{U_0}}(n\alpha)\le 2\lambda(\overbar{U_0})
  \le 4(N_0\epsilon_0)
\end{equation*}
We fix such a $N_1$. To any $n\in [1,N_1]$ with
$n\alpha\notin \overbar{U_0}$ we associate a real $\delta_n$ that
\begin{equation*}
  0<\delta_n\leq\epsilon_1\quad\text{and}\quad
  \pa*{n\alpha-\delta_n,n\alpha+\delta_n} \cap U_0=\emptyset
\end{equation*}
We define
\begin{equation*}
  U_1\cq\bigcup_{\substack{n\in [1,N_1]\\ n\alpha\notin \overbar{U_0}}}
  \pa*{n\alpha-\delta_n,n\alpha+\delta_n}
\end{equation*}
We have
\begin{equation*}
  \setA_{n\in \sbrack*{1,N_{1}}}
  \setone_{U_1}(n\alpha)\geq 1-4N_0\epsilon_0\quad\text{and}\quad
  0<\lambda\pa*{\overbar{U_1}}\leq 2N_1\epsilon_1
\end{equation*}

Note also that by construction $U_0\cap U_1=\emptyset$.

By the uniform distribution of the sequence $(n\alpha)_n$ in the
torus, there exists a number $N_2>N_1$ such that
\begin{equation*}
  \setA_{n\in \sbrack*{1,N_{2}}}
  \setone_{\overbar{U_1}}(n\alpha)\le
  2\lambda\pa*{ \overbar{U_1}}\leq 4(N_1\epsilon_1)
\end{equation*}
We fix such a $N_2$. To any $n\in [1,N_2]$ with
$n\alpha\notin \overbar{U_1}$ we associate a real $\delta_n$
satisfying
\begin{equation*}
  0<\delta_n\leq\epsilon_2\quad\text{and}\quad
  \pa*{n\alpha-\delta_n,n\alpha+\delta_n} \cap U_1=\emptyset
\end{equation*}
\marginnote{Note that the values of the $\delta_n$'s are
  reinitialized.}

We define
\begin{equation*}
  U_2\cq U_0\cup \bigcup_{\substack{n\in [1,N_2]\\ n\alpha\notin \overbar{U_1}}}
  \pa*{n\alpha-\delta_n,n\alpha+\delta_n}
\end{equation*}
We have
\begin{equation*}
  \setA_{n\in \sbrack*{1,N_{2}}}
  \setone_{U_2}(n\alpha)\geq 1-4N_1\epsilon_1
  \quad\text{and}\quad \lambda\pa*{\overbar{U_2}}\leq
  2N_0\epsilon_0+2N_2\epsilon_2
\end{equation*}
Note also that by construction $U_2\cap U_1=\emptyset$ and $U_0\subset U_2$.\\

Let us state now our induction hypothesis. Suppose that, for a fixed
integer $k>0$ we have already constructed two sequences
$$
\left(U_\ell\right)_{0\leq\ell\leq k}\quad\text{and}\quad N_0< N_1<
N_2<\ldots<N_k
$$
such that
\begin{itemize}
\item $ U_0\subset U_2 \subset U_4\subset\ldots$ and
  $ U_1\subset U_3\subset U_5\subset\ldots$,
\item If $\ell$ is even and $\ell'$ is odd, then $U_\ell$ and
  $U_{\ell'}$ are disjoint,
\item Each $U_{\ell}$ is a finite union of open intervals,
\item If $0\leq2\ell\leq k$, then
  \begin{align*}
    \lambda\pa*{U_{2\ell}}
    &\le 2\pa*{N_0\epsilon_0+N_2\epsilon_2+\ldots+N_{2\ell}\epsilon_{2\ell}}\\
    \intertext{and}
    \setA_{n\in \sbrack*{1,N_{2\ell}}}
    \setone_{U_{2\ell}}(n\alpha) &\geq
                                   1-4\pa*{N_1\epsilon_1+N_3\epsilon_3+\ldots+N_{2\ell-1}\epsilon_{2\ell-1}}
  \end{align*}
\item If $1\leq2\ell+1\leq k$, then
  \begin{align*}
    \lambda\pa*{U_{2\ell+1}}
    &\leq2\pa*{N_1\epsilon_1+N_3\epsilon_3+\ldots+N_{2\ell+1}\epsilon_{2\ell+1}}\\
    \intertext{and}
    \setA_{n\in \sbrack*{1,N_{2\ell+1}}}
    \setone_{U_{2\ell+1}}(n\alpha)
    &\geq 1-4\pa*{N_0\epsilon_0+N_2\epsilon_2+\ldots+N_{2\ell}\epsilon_{2\ell}}
  \end{align*}
\end{itemize}

Here begins the induction process. By the uniform distribution of the
sequence $(n\alpha)_n$ in the torus, there exists a number
$N_{k+1}>N_{k}$ such that

\begin{equation*}
  \setA_{n\in \sbrack*{1,N_{k+1}}}
  \setone_{\overbar{U_{k}}}(n\alpha)
  \leq 2\lambda\pa*{\overbar{U_{k}}}
\end{equation*}
We fix such a $N_{k+1}$. To any $n\in[1,N_{k+1}]$ with
$n\alpha\notin \overbar{U_{k}}$ we associate a real $\delta_n$ that

\begin{equation*}
  0<\delta_n\leq\epsilon_{k+1}\quad\text{and}\quad
  \pa*{n\alpha-\delta_n,n\alpha+\delta_n} \cap U_{k}=\emptyset
\end{equation*}
\marginnote{Note that the values of $\delta_n$'s are reinitialized at
  each induction step.}  We define
\begin{equation*}
  U_{k+1}\cq U_{k-1}\cup
  \bigcup_{\substack{n\in[1,N_{k+1}]\\ n\alpha\notin
    \overbar{U_{k}}}}
\pa*{n\alpha-\delta_n,n\alpha+\delta_n}
\end{equation*}

The items of the induction hypothesis are now satisfied by the
sequences $ \left(U_\ell\right)_{0\leq\ell\leq k+1}$\ and
$\left(N_\ell\right)_{0\leq\ell\leq k+1}$.

We can consider these sequences as infinite, and we define
$U\cq \bigcup_{k\geq0} U_{2k}$.

Recalling our choice $N_k\epsilon_k=2^{-k-4}$, we obtain
\begin{align*}
  \setA_{n\in \sbrack*{1,N_{2k}}}\setone_{U}(n\alpha)
  &\ge \setA_{n\in \sbrack*{1,N_{2k}}}\setone_{U_{2k}}(n\alpha)\\
  &\geq 1-4\sum_{\ell}N_{2\ell+1}\epsilon_{2\ell+1}\\
  &=5/6
\end{align*}
and
\begin{align*}
  \setA_{n\in \sbrack*{1,N_{2k+1}}}\setone_{U}(n\alpha)
  &\le \setA_{n\in\sbrack*{1,N_{2k+1}}}\setone_{U_{2k+1}^c}(n\alpha)\\
  &\le 4\sum_{\ell}N_{2\ell}\epsilon^{2\ell}\\
  &=1/3
\end{align*}


\printbibliography[heading=bibintoc]

@Article{zbMATH03026053,
 Author = {Vinogradow, Ivan Matveevich},
 Title = {Representation of an odd number as a sum of three primes},
 FJournal = {Comptes Rendus (Doklady) de l'Acad{\'e}mie des Sciences de l'URSS, Nouvelle S{\'e}rie},
 Journal = {C. R. (Dokl.) Acad. Sci. URSS, n. Ser.},
 ISSN = {1819-0723},
 Volume = {15},
 Pages = {169--172},
 Year = {1937},
 Language = {English},
 Keywords = {11P32},
 zbMATH = {3026053},
 Zbl = {0016.29101}
}

@unpublished{parreau:hal-03805242,
  TITLE = {{Good sequences with uncountable spectrum and singular asymptotic distribution}},
  AUTHOR = {Parreau, Fran{\c c}ois and Cuny, Christophe},
  URL = {https://hal.science/hal-03805242},
  NOTE = {working paper or preprint},
  YEAR = {2022},
  MONTH = Oct,
  PDF = {https://hal.science/hal-03805242/file/good-sequences-7.pdf},
  HAL_ID = {hal-03805242},
  HAL_VERSION = {v1},
}

@Book{zbMATH01030714,
 Author = {Baire, Ren{\'e}},
 Title = {Le{\c{c}}ons sur les fonctions discontinues (writer:
                  Arnaud Denjoy)},
 Edition = {1995 reprint of the 1905 orig.},
 ISBN = {2-87647-124-8},
 Year = {1905},
 Publisher = {Sceaux: {\'E}ditions Jacques Gabay},
 Language = {French},
 Keywords = {01A75,26-03},
 zbMATH = {1030714},
 Zbl = {0897.01040}
}

@book {MR0419394,
    AUTHOR = {Kuipers, Lauwerens. and Niederreiter, Harald},
     TITLE = {Uniform distribution of sequences},
    SERIES = {Pure and Applied Mathematics},
 PUBLISHER = {Wiley-Interscience [John Wiley \& Sons], New
              York-London-Sydney},
      YEAR = {1974},
     PAGES = {xiv+390},
   MRCLASS = {10K05 (22D99)},
  MRNUMBER = {0419394},
MRREVIEWER = {P. Gerl},
}

@article {MR1511862,
    AUTHOR = {Weyl, Hermann},
     TITLE = {\"{U}ber die {G}leichverteilung von {Z}ahlen mod. {E}ins},
   JOURNAL = {Math. Ann.},
  FJOURNAL = {Mathematische Annalen},
    VOLUME = {77},
      YEAR = {1916},
    NUMBER = {3},
     PAGES = {313--352},
      ISSN = {0025-5831},
   MRCLASS = {DML},
  MRNUMBER = {1511862},
       DOI = {10.1007/BF01475864},
       URL = {https://doi.org/10.1007/BF01475864},
}

@article {MR799255,
    AUTHOR = {Lyons, Russell},
     TITLE = {Fourier-{S}tieltjes coefficients and asymptotic distribution
              modulo {$1$}},
   JOURNAL = {Ann. of Math. (2)},
  FJOURNAL = {Annals of Mathematics. Second Series},
    VOLUME = {122},
      YEAR = {1985},
    NUMBER = {1},
     PAGES = {155--170},
      ISSN = {0003-486X},
   MRCLASS = {42A16 (11K06)},
  MRNUMBER = {799255},
MRREVIEWER = {J.-P. Kahane},
       DOI = {10.2307/1971372},
       URL = {https://doi.org/10.2307/1971372},
}

@inproceedings {MR1364897,
    AUTHOR = {Lyons, Russell},
     TITLE = {Seventy years of {R}ajchman measures},
 BOOKTITLE = {Proceedings of the {C}onference in {H}onor of {J}ean-{P}ierre
              {K}ahane ({O}rsay, 1993)},
   JOURNAL = {J. Fourier Anal. Appl.},
  FJOURNAL = {The Journal of Fourier Analysis and Applications},
      YEAR = {1995},
    NUMBER = {Special Issue},
     PAGES = {363--377},
      ISSN = {1069-5869},
   MRCLASS = {42A63 (04A15 42A55 43-03 43A10 43A25 43A46)},
  MRNUMBER = {1364897},
MRREVIEWER = {Colin C. Graham},
}

@article{marcinkiewicz1939remarque,
  title={Une remarque sur les espaces de M. Besicowitch},
  author={Marcinkiewicz, J\'{o}zef},
  journal={CR Acad. Sci. Paris},
  volume={208},
  pages={157--159},
  year={1939}
}

@article {MR1721622,
    AUTHOR = {Jones, Roger L. and Lacey, Michael and Wierdl, M\'{a}t\'{e}},
     TITLE = {Integer sequences with big gaps and the pointwise ergodic
              theorem},
   JOURNAL = {Ergodic Theory Dynam. Systems},
  FJOURNAL = {Ergodic Theory and Dynamical Systems},
    VOLUME = {19},
      YEAR = {1999},
    NUMBER = {5},
     PAGES = {1295--1308},
      ISSN = {0143-3857},
   MRCLASS = {28D05 (37A30)},
  MRNUMBER = {1721622},
MRREVIEWER = {Joseph Max Rosenblatt},
       DOI = {10.1017/S0143385799146819},
       URL = {https://doi.org/10.1017/S0143385799146819},
}

@book{tao_vu_2006,
place={Cambridge},
series={Cambridge Studies in Advanced Mathematics},
		  title={Additive Combinatorics},
		  DOI={10.1017/CBO9780511755149},
                  publisher={Cambridge University Press},
		  author={Tao, Terence and Vu, Van H.},
		  year={2006},
                  collection={Cambridge Studies in Advanced
                  Mathematics}}

@article {MR1768824,
    AUTHOR = {Weber, Michel},
     TITLE = {Estimating random polynomials by means of metric entropy
              methods},
   JOURNAL = {Math. Inequal. Appl.},
  FJOURNAL = {Mathematical Inequalities \& Applications},
    VOLUME = {3},
      YEAR = {2000},
    NUMBER = {3},
     PAGES = {443--457},
      ISSN = {1331-4343},
   MRCLASS = {60F99 (28D99 42A61)},
  MRNUMBER = {1768824},
MRREVIEWER = {Dominique Schneider},
       DOI = {10.7153/mia-03-44},
       URL = {https://doi.org/10.7153/mia-03-44},
}

@article {MR2206342,
    AUTHOR = {Cohen, Guy and Cuny, Christophe},
     TITLE = {On random almost periodic trigonometric polynomials and
              applications to ergodic theory},
   JOURNAL = {Ann. Probab.},
  FJOURNAL = {The Annals of Probability},
    VOLUME = {34},
      YEAR = {2006},
    NUMBER = {1},
     PAGES = {39--79},
      ISSN = {0091-1798},
   MRCLASS = {37A50 (42A05 47A35 60F15)},
  MRNUMBER = {2206342},
MRREVIEWER = {James H. Olsen},
       DOI = {10.1214/009117905000000459},
       URL = {https://doi.org/10.1214/009117905000000459},
}

@article {MR65679,
    AUTHOR = {Salem, Rapha\"{e}l and Zygmund, Antoni},
     TITLE = {Some properties of trigonometric series whose terms have
              random signs},
   JOURNAL = {Acta Math.},
  FJOURNAL = {Acta Mathematica},
    VOLUME = {91},
      YEAR = {1954},
     PAGES = {245--301},
      ISSN = {0001-5962},
   MRCLASS = {42.4X},
  MRNUMBER = {65679},
MRREVIEWER = {H. R. Pitt},
       DOI = {10.1007/BF02393433},
       URL = {https://doi.org/10.1007/BF02393433},
}

\end{document}